\documentclass[11pt]{article}

\usepackage{amsmath,amssymb,amsthm,mathtools,mathrsfs}
\usepackage{geometry}
\usepackage{xcolor}
\usepackage{enumitem}
\usepackage{tikz-cd}
\usepackage{cite}
\usepackage[hidelinks]{hyperref}

\title{Schottky versus super Schottky in genus 4}

\author{
Ron Donagi\thanks{Departments of Mathematics and Physics, University of Pennsylvania,
Philadelphia, PA 19104, USA.}
\and
Simone Noja\thanks{Dipartimento di Matematica, Universit\`a degli Studi di
Bari Aldo Moro, Via E.~Orabona 4, 70125 Bari, Italy.}}

\date{}

\newtheorem{theorem}{Theorem}[section]
\newtheorem{proposition}[theorem]{Proposition}
\newtheorem{lemma}[theorem]{Lemma}

\newtheorem{definition}[theorem]{Definition}

\newcommand{\OO}{\mathcal O}
\newcommand{\MM}{\mathcal M}
\newcommand{\FM}{\mathfrak M}
\newcommand{\Aa}{\mathcal A}
\newcommand{\SSS}{\mathcal S}

\newcommand{\Sym}{\operatorname{Sym}}
\newcommand{\rk}{\operatorname{rk}}

\begin{document}

\maketitle
\tableofcontents

\begin{abstract}
The study of super Riemann surfaces and their moduli led Witten and Felder, Kazhdan and Polishchuk to ask what is the smallest
$d$ such that the $d$-th power of the Schottky ideal is contained in the super Schottky ideal.
Felder, Kazhdan and Polishchuk proved that $d=g$ in odd genus $g\geq5$ and that $d\in\{g-1,g\}$ in even genus $g\geq4$; Y.~Shen subsequently proved that $d=g$ in even genus $g\geq6$.  Thus the only remaining genus with nontrivial Schottky ideal was $g=4$.
Here we close this gap by showing that in genus $4$ too, the smallest power of the Schottky ideal contained in the super
Schottky ideal is $d=g=4$. The question is equivalent to one concerning the codifferential of the super period map.
{The quadratic odd contribution sends a conormal vector to a bivector,
represented by a skew-symmetric $(2g-2)\times(2g-2)$ matrix.
The question is to find such a vector for which, at a generic point,
this matrix has maximal rank $2g-2$, or $6$ in our case.}
Our computation uses the standard exact sequences on $C\times C$ attached to
the sheaves $\OO(a,b,c)$, together with a degeneration to a vanishing theta-null and some facts about the Szeg\H{o} kernel.
At a curve with a vanishing theta-null, a regularized version of the codifferential can be described by an explicit multiplication map,
allowing an easy computation of the rank. This rank turns out to be $4$.
{A first-order calculation on an explicit deformation of a cyclic trigonal
vanishing theta-null differentiates the global Gaussian-map identity for the
regularized conormal bivector. Including the variation of the Gaussian map
gives a nonzero first normal symbol.}
The resulting first-order form is nonzero on the null space of the limiting skew-symmetric matrix,
so the rank jumps to $6$ on nearby curves, showing that $d=4$.
\end{abstract}

\section{Introduction}

We work over $\mathbb C$, with $g\geq2$. Let
\[
p:\MM_g\to\Aa_g
\]
be the classical period map. Write $\FM_g^{+,0}$ for the open substack
of even supermoduli whose reduced spin curves $(C,L)$ satisfy
$h^0(C,L)=0$, and let
\[
\widetilde p:\FM_g^{+,0}\longrightarrow\Aa_g
\]
be the super period map on this domain. When period coordinates are
needed, we pass to a symplectically marked cover. Across the theta-null
divisor the map is in general meromorphic.
Write $I_g$ for the classical Schottky ideal and $\widetilde I_g$ for
the ideal of the schematic image of this restricted super period map:
locally, $f\in\widetilde I_g$ precisely when $\widetilde p^{\,*}f=0$.
These definitions can equivalently be made on the marked cover.

Since the target is purely bosonic, the super period map factors through
\[
\widetilde p:\FM_g^{+,0}/\iota\longrightarrow\Aa_g.
\]
Here $\iota$ is the tautological parity involution, acting as the identity
on even coordinates and as $-1$ on odd coordinates, and $/\iota$ denotes
the bosonic quotient of \cite{CV}: its local functions are the invariant,
that is, even, functions. It is not a quotient stack by an additional
external $\boldsymbol\mu_2$-action.

In order to understand the image of the super period map, Witten asked what was the smallest integer $d$ such that
\[
(I_g)^d\subset \widetilde I_g.
\]
Since $\FM_g$ has odd dimension $2g-2$ (and $\Aa_g$ is purely bosonic), one always has
$(I_g)^g\subset \widetilde I_g,$
or in other words, $d \leq g$.
This question was studied in depth by Felder, Kazhdan and Polishchuk in {\cite[Theorem~8.3]{FKP}}.
They proved that:
\begin{itemize}[leftmargin=2em]
\item if $g\geq5$ is odd, then $d=g$;
\item if $g\geq4$ is even, then $d\ge g-1$.
\end{itemize}
Their results leave open the question of whether the minimal $d$ equals $g$ or $g-1$ for even $g\ge 4$.
In a companion paper \cite{S}, Yuanyuan Shen settles most of this ambiguity:
she shows that the smallest $d$ equals $g$ for all even $g\ge 6$.
This leaves open the single case $g=4$. In this note we show that in this case as well, $d=g=4$.
{The genus-$4$ argument below is independent of the higher-genus result in \cite{S}.}

\begin{theorem}\label{thm:main}
In genus $4$, the smallest $d$ such that
$(I_4)^d\subset \widetilde I_4$
is $d=4.$
\end{theorem}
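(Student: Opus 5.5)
Since $d\le g$ always holds and Felder--Kazhdan--Polishchuk give $d\ge g-1=3$, it suffices to exhibit an element of $I_4^3$ that is not in $\widetilde I_4$. Following the FKP reduction, this amounts to a statement about the codifferential of $\widetilde p$.

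Locally on $\FM_4^{+,0}$, choose odd coordinates $\eta_1,\dots,\eta_6$. Expanding $\widetilde p^{\,*}f$ in the $\eta$'s, the $\eta$-degree-$2k$ component of $\widetilde p^{\,*}(f_1\cdots f_k)$ for $f_i\in I_4$ is
\[
\prod_i \beta(df_i|_{C}),
\]
where $\beta:N^\vee_{\MM_4/\Aa_4}\to \wedge^2 (\text{odd tangent})^\vee$ is the quadratic odd part of the codifferential. Under Serre duality it is a map
\[
\beta: \ker\bigl(\Sym^2H^0(K_C)\to H^0(K_C^2)\bigr)\longrightarrow \wedge^2 H^0(K_C^{3/2}\otimes L^{-1})^\vee \cong \wedge^2 H^1(L^{-1}\otimes K^{-1/2})\ldots,
\]
i.e.\ a skew $6\times6$ matrix. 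The lower-order terms vanish because $f_i$ vanish on $\MM_4$. So $I_4^3\not\subset\widetilde I_4$ iff for generic $(C,L)$ and suitable $q\in I_2(C)$ (here the single quadric, since $I_4$ is generated locally by one equation, the Schottky--Igusa form), $\beta(q)^{\wedge 3}\neq 0$, i.e.\ $\beta(q)$ has rank $6$. Because the conormal space is one-dimensional, this is a single Pfaffian condition, and it suffices to find one point where the Pfaffian is nonzero.

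\textbf{Step 1: explicit formula for $\beta$.} Write $\beta(q)$ via the Szeg\H{o} kernel $S_L$: for odd tangent vectors given by sections of $H^1(L\otimes K^{-1})$ (dual to $H^0(K^{3/2}\ldots)$), pair them as
\[
\int\!\!\int_{C\times C} q(x,y)\,S_L(x,y)\,\chi_1(x)\chi_2(y).
\]
I will derive this from the exact sequences on $C\times C$ for $\OO(a,b,c)=p_1^*(\cdot)\otimes p_2^*(\cdot)\otimes\OO(c\Delta)$, using that $q$ lies in the kernel of restriction to $\Delta$, so that $q\cdot S_L$ is regular.

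\textbf{Step 2: degenerate to a vanishing theta-null.} Let $L_0$ be the $g^1_3$ on a curve whose canonical model lies on a quadric cone, so that $h^0(L_0)=2$. Here $S_L$ has a pole along $\theta$, and the rescaled limit is $s\boxtimes s'$-type, built from the sections of $L_0$. The regularized $\beta$ then becomes an explicit multiplication map, namely contraction with $q\in I_2$ together with $H^0(L_0)\otimes H^0(L_0)$. I expect its rank to be computed as $4$, with a $2$-dimensional kernel $V$.

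\textbf{Step 3: first-order jump.} Deform to first order off the theta-null divisor, using an explicit family of cyclic trigonal curves $y^3=\dots$ for computability, and write $\beta_t=\beta_0+t\beta_1+O(t^2)$. A standard fact about skew pencils then applies: if $\beta_1|_{V}\neq 0$ as a form on the null space $V=\ker\beta_0$, then $\beta_t$ has rank $6$ for small $t\neq0$. Indeed, the Pfaffian expands as $t\cdot \mathrm{Pf}'(\beta_0)\,\beta_1|_V+O(t^2)$ up to a nonzero factor. Computing $\beta_1|_V$ requires differentiating the Gaussian-map identity that expresses the regularized bivector. This includes the next term of the Szeg\H{o} kernel expansion near the theta-null, as well as the variation of the Gaussian map itself.

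\textbf{Main obstacle.} The hardest step is Step 3. Naive first-order terms tend to cancel by symmetry, and on cyclic trigonal curves the $\mu_3$-symmetry forces many terms to vanish. I would first decompose $V$ and $\beta_1$ into $\mu_3$-eigenspaces, keeping only the equivariant pairing, and then compute that pairing explicitly in the basis $dx/y^k$. The goal is to show it is nonzero, which gives rank $6$ generically and hence $d=4$.
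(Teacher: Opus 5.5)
\textbf{There is a genuine gap: Step~3, the nonvanishing of the first-order form on the null space $R$, is stated as a goal rather than proved.}

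Your outline has the same architecture as the paper: the Szeg\H{o}-kernel description of the quadratic odd part of the codifferential, degeneration to a vanishing theta-null where the regularized bivector $\omega_0$ has rank $4$, a first-order variation on the two-dimensional null space $R$, a Schur-complement rank jump, and a cyclic trigonal test case. But it stops exactly where the proof has its content, and you give no mechanism that could decide Step~3. Since $\wedge^2R^\vee$ is one-dimensional, the question is whether a single number vanishes, so a $\mu_3$-eigenspace decomposition buys nothing.

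In the paper that number is $3c-\varepsilon$: the first normal symbol is $\nu=(3c-\varepsilon)[a\alpha]\wedge[a\beta]$, a sum of two contributions in the same line.
\begin{itemize}
\item One comes from the polar part $\varepsilon u/\delta$ of $\sigma_u=us_u$, whose residue is exactly $u$. It multiplies the second diagonal jet $J^2_\Delta(\mathcal Q_0)=w^2$ of the quadric.
\item The other is the variation of the Gaussian map itself, $\dot\gamma_0(\omega_0)=-3c\,w^2$. It is computed from explicit global first-order lifts of $\alpha^3,\dots,\beta^3$, even though $\alpha,\beta$ themselves do not lift.
\end{itemize}
Whether these cancel depends on the constant $c$ in $\sigma_0=c\,\alpha\wedge\beta$, i.e.\ on how the limiting rescaled Szeg\H{o} kernel is normalized relative to the chosen smoothing parameter. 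Your proposal never identifies $c$ as something that must be computed. The paper obtains it from the Kodaira--Spencer class of the explicit family $Q_u=X_0X_2-X_1^2+uX_3^2$:
\begin{itemize}
\item Serre duality gives $\langle\xi,w\rangle=\pm3$.
\item The zero-mode obstruction satisfies $B_\xi(\alpha,\beta)=-\tfrac12\langle\xi,w\rangle=\pm\tfrac32$.
\item The first-order lifting equation for $\sigma_u(\cdot,z_u)$ gives $c\,B_\xi(\alpha,\beta)=\pm1$.
\end{itemize}
Hence $c=\pm\tfrac23$ and $(3c)^2=4\neq1=\varepsilon^2$, so $\nu\neq0$.

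Two further ingredients are missing.
\begin{itemize}
\item To read off the class of $\dot\omega_0$ in $\wedge^2R^\vee$ from its Gaussian image, you need $\ker\gamma_0\subset\operatorname{Im}\wedge V$ with $\operatorname{Im}=\Sym^3H^0(C_0,L_0)$. The paper proves this by an explicit kernel computation on $y^3=P(x)$: the kernel is spanned by $e_1\wedge e_2-\tfrac13e_0\wedge e_3$.
\item Step~2 silently assumes the Szeg\H{o} kernel has only a simple pole in the transverse family. The paper deduces this from invertibility of $B_\xi$, which is the same computation that fixes $c$.
\end{itemize}

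Two minor points. Step~1 should be cited (D'Hoker--Phong; Codogni--Viviani, Thm.~6.5) rather than derived from the exact sequences, which only show that $q\,S_L$ is regular. And the target of $\beta$ should be $\wedge^2H^0(K_C^{3/2})$; your $H^0(K_C^{3/2}\otimes L^{-1})\cong H^0(K_C)$ is only four-dimensional.
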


The proof uses the geometry of the sheaves
\[
\OO(a,b,c)=p_1^*K_C^{a/2}\otimes p_2^*K_C^{b/2}\otimes \OO_{C\times C}(c\Delta)
\]
on $C\times C$, where $C$ is a spin curve and $\Delta\subset C\times C$ is the diagonal.
These sheaves and their exact sequences control, in a uniform way,
classical analytic kernels on curves and intrinsic classes on moduli.
In particular, they give the Atiyah class of $\MM_g$ and the obstruction class of $\FM_g$;
see \cite{DW2}.
They also provide a convenient framework for the codifferential of the super period map,
as well as the related Szeg\H{o} kernel.

The codifferential ${d^*\widetilde{p}}$ of the super period map $\widetilde{p}$
can be interpreted as multiplication with the Szeg\H{o} kernel $s$.
It takes a conormal vector $q$ to the Jacobian locus
to an element $sq\in \wedge^2 V$, where
\[
V:=H^0(K_C^{\otimes \frac32}).
\]
The question boils down to computing the rank of $sq$,
which can be thought of as a skew-symmetric $(2g-2)\times (2g-2)$ matrix.
The claim, proved by Felder--Kazhdan--Polishchuk in odd genus and by Shen in even genus $g\ge 6$, and proved here for $g=4$,
is that generically this rank takes the largest possible value, namely $2g-2$.
This implies that the $(g-1)$-st power of the Schottky ideal is not contained in super Schottky,
so $d=g$.

The calculation at curves with a vanishing theta-null is classical in spirit.
Both the super period map and the Szeg\H{o} kernel blow up there,
so we work instead with a regularized version that remains regular.
Everything is encoded in the multiplication map
\[
\Sym^2 H^0(K_C)\otimes \wedge^2 H^0(K_C^{1/2})
\longrightarrow
\wedge^2 H^0(K_C^{3/2}).
\]
After rescaling the Szeg\H{o} kernel, its limiting value lies in
$\wedge^2 H^0(K_C^{1/2})$, while the conormal space is the space of canonical quadrics
\[
I_2(K_C):=\ker\!\left(
\operatorname{Sym}^2 H^0(K_C)\longrightarrow H^0(K_C^2)
\right).
\]
{For a nonhyperelliptic genus-$4$ curve,} this space is one-dimensional, generated by the unique canonical quadric.
Everything can then be written explicitly, and the image in
$\wedge^2 H^0(K_C^{3/2})$ is seen to have rank $4$.

To pass from rank $4$ to rank $6$, we work at a cyclic trigonal vanishing theta-null
and choose a deformation transverse to the theta-null divisor.  We compute the first variation
of the {regularized conormal bivector} on the two-dimensional null space of the limiting rank-$4$ skew-symmetric matrix.
\begingroup
We differentiate the global Gaussian-map identity for the regularized
bivector, using actual global first-order lifts of sections of $L^3$.
An explicit kernel calculation for the central Gaussian map detects the
resulting class in the normal quotient. A subsequent Serre-duality
calculation determines the polar coefficient of the regularized
Szeg\H{o} kernel and proves that the first normal symbol is nonzero.
The rank-jump lemma then gives rank $6$ on nearby fibers, and openness
and irreducibility give the generic statement.
\endgroup

\section*{Acknowledgments}

The authors benefited from helpful conversations with Yuanyuan Shen,
who was working on a parallel project in higher genus.
They are grateful to Sasha Polishchuk, who ran the paper through ChatGPT
and shared with us its very helpful feedback.
The authors also thank Sam Grushevsky, Nadia Ott, and Edward
Witten for helpful correspondence and discussions.

The research of R.D. was partially supported 
by NSF grant DMS--2401422, \emph{Geometry and Strings};
by NSF grant DMS--2244978, \emph{FRG: New Birational Invariants}; 
and by DFG--SFB 1624, \emph{Higher structures, moduli spaces and integrability}--506632645.

\section{Background on spin curves and the sheaves
\texorpdfstring{$\OO(a,b,c)$}{O(a,b,c)}}

\subsection{Spin curves and vanishing theta-null}

Let $C$ be a smooth projective curve of genus $g$. A \emph{theta characteristic} on $C$ is a line bundle $L$ such that
\[
L^{\otimes 2}\cong K_C.
\]
A \emph{spin curve} is a triple $(C,L,\phi)$ with a specified
isomorphism $\phi:L^{\otimes2}\xrightarrow{\sim}K_C$; we usually suppress
$\phi$ and write $(C,L)$ and $K_C^{1/2}=L$.

Let $\SSS_g$ denote the unrigidified moduli stack of spin curves.
It has even and odd connected components according to the parity of
$h^0(C,L)$. The scalar automorphisms preserving $\phi$ form the central
subgroup $\boldsymbol\mu_2$. Write $\SSS_g^{\mathrm{rig}}$ for its
rigidification. Then
\[
\SSS_g\longrightarrow\SSS_g^{\mathrm{rig}}
\quad\text{is a }\boldsymbol\mu_2\text{-gerbe},\qquad
\SSS_g^{\mathrm{rig}}\longrightarrow\MM_g
\quad\text{is finite \'etale}.
\]
We retain $\SSS_g$ for the unrigidified stack throughout; in particular,
it is the reduced stack of supermoduli used in Section~3.

An even spin curve $(C,L)$ with
\[
h^0(C,L)>0
\]
is said to have a \emph{vanishing theta-null}. For a generic such curve,
\[
h^0(C,L)=2.
\]
In genus $4$, this equality holds for all vanishing theta-nulls.
{Moreover, a vanishing theta-null $L$ on a non-hyperelliptic
genus-$4$ curve $C$ gives a basepoint-free pencil of degree $3$.}
{Indeed, $\deg L=3$, so Clifford's inequality and even parity force
$h^0(L)=2$. A base point would leave a degree-two pencil, contradicting
nonhyperellipticity.}

\subsection{The sheaves \texorpdfstring{$\OO(a,b,c)$}{O(a,b,c)}}

Let $(C,L)$ be a spin curve, with $L=K_C^{1/2}$. Let
\[
p_1,p_2:C\times C\to C
\]
be the projections, and let $\Delta\subset C\times C$ be the diagonal.

\begin{definition}
For integers $a,b,c$, define
\[
\OO(a,b,c)
=
p_1^*K_C^{a/2}\otimes p_2^*K_C^{b/2}\otimes \OO_{C\times C}(c\Delta).
\]
\end{definition}

A local section of $\OO(a,b,c)$ has the form
\[
\varphi=f(x,y)\,\frac{dx^{a/2}dy^{b/2}}{(x-y)^c},
\]
where $f(x,y)$ is holomorphic. Thus $c$ controls the order of pole along the diagonal.

{Let $j_\Delta:C\hookrightarrow C\times C$ be the diagonal embedding.}
Restriction to the diagonal gives the exact sequence
\begin{equation} \label{2.1}
0\to \OO(a,b,c-1)\to \OO(a,b,c)\to {j_{\Delta*}K_C^{(a+b-2c)/2}}\to 0.
\end{equation}

When $a=b$, we use the signed lift of the transposition to the half-form factors, fixed by the convention
\begin{equation} \label{lift}
    dx^{a/2}dy^{b/2}\longmapsto(-1)^{ab}dy^{b/2}dx^{a/2}.
\end{equation}

Together with the interchange of the two factors and of the local equation of $\Delta$, this defines an involution on $\OO(a,a,c)$.
{We take its eigenspaces on cohomology, denoted
$H^i(C\times C,\OO(a,a,c))^\pm$. Since the involution moves the base,
its eigensheaves as modules are formed only after pushforward to the
symmetric square $C^{(2)}$, not as $\OO_{C\times C}$-submodules.}
In particular, for odd $a$ the $+$-eigenspace of sections regular along the diagonal is the exterior, rather than the symmetric, square; for example,
\[
{H^0\bigl(\OO(1,1,0)\bigr)^+}=\bigwedge\nolimits^2H^0(C,L).
\]

\subsection{The Szeg\H{o} kernel}

Let $(C,L)$ be a spin curve with $h^0(C,L)=0$.
Then the Szeg\H{o} kernel, see e.g. \cite{BZB, CV, FI, FKP} is the unique section
\[
s\in {H^0(C\times C,\OO(1,1,1))^+}
\]
whose residue along the diagonal is $1$. Equivalently, it is the unique section mapping to $1\in H^0(\OO_C)$ under the map
\[
{H^0(\OO(1,1,1))^+} \longrightarrow H^0(\OO_C)
\]
coming from \eqref{2.1} with $(a,b,c)=(1,1,1)$.

If $h^0(L)>0$, the Szeg\H{o} kernel itself is no longer defined.
In a one-parameter family $(C_t,L_t)$
degenerating to a vanishing theta-null $(C_0,L_0)$
{at a smooth point of the reduced theta-null divisor with $h^0(C_0,L_0)=2$, and transverse to that divisor,}
the Szeg\H{o} kernel $s_t$ blows up to first order at $t=0$.
The rescaled Szeg\H{o} kernel $ts_t$ has a well-defined limit.
{The zero-mode obstruction map controls the pole order; for the
family used here its nondegeneracy is verified explicitly in
Subsection~{\ref{subsec:relative-normalization}}.}

\section{The codifferential of the super period map}

\subsection{The obstruction sequence on supermoduli}

The exact sequence \eqref{2.1} for $(a,b,c)=(3,3,1)$ gives
\[
{0\to \OO(3,3,0) \to \OO(3,3,1) \to j_{\Delta*}K_C^2 \to 0.}
\]
{Taking the $+$-eigenspaces of global sections, and using
$H^1(C,L^3)=H^0(C,L^{-1})^*=0$, yields}
\begin{equation} \label{obstruction}
0\to \wedge^2 H^0(C,K_C^{3/2})
\to {H^0(C\times C,\OO(3,3,1))^+}
\to H^0(C,K_C^2)\to 0.
\end{equation}
This sequence is the fiber at $(C,L)$ of a sequence on $\SSS_g$ :
\begin{equation} \label{T*}
    0\to \wedge^2 T_-^* \to T^* \to T_+^* \to 0.
\end{equation}
Its extension class was identified in {\cite[Proposition~3.1 and Theorem~3.2]{DW2}} as
the first obstruction to splitting supermoduli space.
Here we will use a related but somewhat different interpretation.

Let $X$ be a supermanifold,
and $i:X_{red} \to X$ the inclusion of the underlying reduced bosonic space.
{The pullback has the canonical parity decomposition}
\begin{equation} \label{pm}
    {i^*T_X=T_+\oplus T_-,}
\end{equation}
where $T_{\pm}$ are the even and odd tangent bundles.
{This decomposition is automatically split; the obstruction is
carried by the different extension \eqref{T*}.}

For the superstack $\FM_g$ of super Riemann surfaces, the bosonic
reduction is the moduli stack $\FM_{g,\mathrm{red}}=\SSS_g$ of spin
curves.  At a spin curve $(C,L)$ the even and odd cotangent spaces are
\[
T^*_{+,(C,L)}\FM_g
=T^*_{(C,L)}\SSS_g
\cong H^0(C,K_C^2),
\qquad
T^*_{-,(C,L)}\FM_g
\cong H^0(C,K_C^{3/2}).
\]
As explained in Section~1.1 of \cite{DW2}, $T_-$ is naturally a vector
bundle on the spin stack, but on the {rigidified spin stack} it is twisted by
the $\boldsymbol\mu_2$-gerbe coming from the scalar automorphism of the spin
bundle.  The tautological involution $\iota$ acts by $-1$ on $T_-^*$ and
therefore by $+1$ on $\bigwedge^2T_-^*$.

Let $X=\FM_g/\iota$, and let $\mathcal J$ be the ideal of
$\SSS_g=X_{\mathrm{red}}$ in $X$.  In {local odd coordinates,}
the first nonzero invariant functions are the quadratic monomials.
Consequently
\[
\mathcal J/\mathcal J^2\cong\bigwedge\nolimits^2T_-^*.
\]
The conormal sequence of $\SSS_g\hookrightarrow X$ is therefore
\[
0\longrightarrow\bigwedge\nolimits^2T_-^*
\longrightarrow\Omega_X\big|_{\SSS_g}
\longrightarrow\Omega_{\SSS_g}\longrightarrow0,
\]
and its fiber at $(C,L)$ is precisely \eqref{obstruction}.  Thus the
middle term denoted $T^*$ in \eqref{T*} is
$\Omega_X|_{\SSS_g}$; the left-hand term is the conormal bundle of the
reduced locus in the quotient, not an odd tangent bundle.

\subsection{The codifferential of the super period map}

The cotangent at the Jacobian of a curve $C$ to
the moduli stack $\Aa_g$ of principally polarized abelian varieties
is identified with the space of quadrics in canonical space, $\Sym^2 H^0(K_C)$.
The cotangent at $C$ to $\MM_g$ is $H^0(K_C^2)$, and the codifferential of the
ordinary period map $p:\MM_g\to \Aa_g$ is the restriction map
\[
d^*p: \Sym^2 H^0(K_C) \to H^0(K_C^2).
\]
\begingroup
We fix the following polarization convention throughout:
\[
P\cdot Q\longmapsto P\boxtimes Q+Q\boxtimes P.
\]
This is \emph{unaveraged} symmetrization; its restriction to the diagonal
is $2PQ$. Thus, under this identification with symmetric kernels, the
displayed restriction map is twice polynomial multiplication. We use
the corresponding scalar normalization of the cotangent identification
in the diagram below. In particular, both rows use the same diagonal
restriction, the right vertical map is the identity, and the numerical
constants in the later kernel products follow this convention.

For a nonhyperelliptic curve $C$, canonical multiplication is surjective,
so the conormal sequence is given, fiber by fiber, by:
\endgroup

\begin{equation}
0\to {H^0(\OO(2,2,-1))^+} \to {H^0(\OO(2,2,0))^+} \to H^0(K_C^2)\to 0.
\end{equation}
Multiplication by the Szeg\H{o} kernel gives the basic commutative diagram:
\begin{equation}
    \begin{tikzcd}[column sep=small]
0 \arrow[r] &
{H^0(\OO(2,2,-1))^+} \arrow[r] \arrow[d,"s_1"] &
{H^0(\OO(2,2,0))^+} \arrow[r] \arrow[d,"s_2"] &
H^0(K_C^2) \arrow[r] \arrow[d,"s_3"] &
0 \\
0 \arrow[r] &
{H^0(\OO(3,3,0))^+} \arrow[r] &
{H^0(\OO(3,3,1))^+} \arrow[r] &
H^0(K_C^2) \arrow[r] &
0.
\end{tikzcd}
\end{equation}

\begingroup
The lower sequence is the conormal sequence of the reduced spin locus
in the bosonic quotient $\FM_g/\iota$. On the open locus $h^0(C,L)=0$,
the vertical maps, given by multiplication with the Szeg\H{o} kernel,
express the codifferential restricted to this reduced locus:
$s_2$ is $d^*\widetilde p|_{\SSS_g^{+,0}}$, and $s_1$ is the induced
map on conormal spaces. The latter records the quadratic odd
contribution, rather than a linear map on odd tangent directions.
Here $\SSS_g^{+,0}$ denotes the open part of $\SSS_g^+$ with $h^0(L)=0$.
\endgroup

This result is based on an analytic formula for the super period matrix
obtained by D’Hoker and Phong in \cite{DP},
explained in its modern formulation by Witten in section 8.3 of \cite{W}.
Codogni and Viviani review this analytic formula in section 6.2 of \cite{CV},
and use it to prove the identification of the codifferential
as multiplication with the Szeg\H{o} kernel in their Theorem 6.5.

This description is made on the open even-spin locus where $h^0(C,L)=0$
(and, globally, after passing to the usual symplectically marked cover).
The super period map itself is not defined by this formula on the
theta-null divisor.  In the degeneration below we use only the
regularized conormal map obtained by multiplying the Szeg\H{o} kernel by
a local equation of that divisor; its special value is a limit, whereas
all assertions about the actual codifferential are made on the nearby
fibers where $h^0(C,L)=0$.

For $g=4$, the target of $s_1$ is
$\wedge^2 H^0(K_C^{3/2})$, where
$\dim H^0(K_C^{3/2})=6.$
Thus a conormal vector determines a $6\times6$ skew form, or equivalently a bivector in
$\wedge^2 H^0(K_C^{3/2})$.
To prove Theorem~\ref{thm:main}, it suffices to show that for a general {even} spin curve of genus $4$,
the image of $s_1$ contains a bivector of rank $6$.

\section{Degeneration to a vanishing theta-null}

Let $(C_t,L_t)$ be a one-parameter family of smooth {nonhyperelliptic} genus $4$ spin curves, chosen transverse to the {reduced} vanishing theta-null divisor {at a smooth point}, with
\begin{itemize}[leftmargin=2em]
\item $h^0(C_t,L_t)=0$ for $t\neq 0$,
\item $h^0(C_0,L_0)=2$.
\end{itemize}
Write
\[
L_0=K_{C_0}^{1/2}.
\]
For $t\neq 0$, let
\[
s_t\in {H^0(C_t\times C_t,\OO(1,1,1))^+}
\]
be the Szeg\H{o} kernel.

The exact sequence
\begin{equation}
    0\to \OO(1,1,0)\to \OO(1,1,1)\to {j_{\Delta*}\OO_C}\to 0
\end{equation}
gives the long exact sequence
\begin{equation}
0\to {H^0(\OO(1,1,0))^+}\xrightarrow{i}
{H^0(\OO(1,1,1))^+}\xrightarrow{j}
H^0(\OO_C)\xrightarrow{k}
{H^1(\OO(1,1,0))^+}.
\end{equation}

Here
\[
{H^0(\OO(1,1,0))^+} \cong \wedge^2 H^0(C,L),
\]
and
\[
{H^1(\OO(1,1,0))^+} \cong H^0(C,L)\otimes H^1(C,L)
\cong \operatorname{End}H^0(C,L),
\]
using Serre duality.

The map $k$ sends $1$ to the identity endomorphism. We thus recover some of the results obtained by
Farkas and Izadi in their beautiful analysis of the theta-null divisor and Szeg\H{o} kernel \cite{FI}:
\begin{itemize}[leftmargin=2em]
\item if $h^0(C,L)=0$, then $k=0$, so $j$ is an isomorphism and the Szeg\H{o} kernel is the unique section with $j(s)=1$;
\item if $h^0(C,L)=2$, then $k$ is injective, so $j=0$ and $i$ is an isomorphism.
\end{itemize}

The preceding long exact sequence describes the special fiber, but by
itself it does not determine the order of the pole in a transverse
family.  Let $\xi{\in H^1(C_0,T_{C_0})}$ be the
Kodaira--Spencer class and let
\[
B_\xi:H^0(C_0,L_0)\longrightarrow H^1(C_0,L_0)
\]
be the first-order obstruction map for lifting the two zero modes.  {Under Serre duality $B_\xi$ is alternating. Near a smooth point
of the reduced theta-null divisor with two zero modes, a local skew
two-by-two block presents the zero-mode cohomology; its Pfaffian is a
local equation of that divisor; {see \cite[Section~4.4]{FKP}}. Transversality therefore makes
$B_\xi$ nonzero and hence invertible. For our explicit family, this
will also follow directly from
$B_\xi(\alpha,\beta)=\pm3/2$ in
Subsection~\ref{subsec:relative-normalization}.}

\begingroup
Here is an algebraic verification that the pole is simple once $B_\xi$
is invertible. The invariant kernel spaces
\[
E_t:=H^0(C_t\times C_t,\OO(1,1,1))^+
\]
have constant dimension $1$ near $t=0$, so they form a line bundle $E$
by cohomology and base change; see also \cite[Proposition~1.2]{FI}.
Choose a local generator $e_t$ and put
$h(t)=\operatorname{Res}_\Delta(e_t)$. Then $h(0)=0$ and
$e_0$ is a nonzero multiple of $\alpha\wedge\beta$, for a basis
$\alpha,\beta$ of $H^0(C_0,L_0)$.

If $h(t)=O(t^2)$, choose a point $z$ where
$e_0(\,\cdot\,,z)\neq0$ and extend it to a section $z_t$ of the family.
Restricting $e_t$ to $C_t\times\{z_t\}$ modulo $t^2$ would have
zero principal part, so it would be a global regular section of
$L_t\otimes(L_t)_{z_t}$ to first order. Trivializing the line
$(L_t)_{z_t}$ would therefore lift the nonzero section
$e_0(\,\cdot\,,z)$ of $L_0$, contradicting the injectivity of $B_\xi$.
Thus $h(t)=t\,v(t)$ with $v(0)\neq0$. For $t\neq0$ the normalized
kernel is $s_t=e_t/h(t)$, so $t\,s_t=e_t/v(t)$ extends holomorphically
with nonzero central value. In particular,
\endgroup

\[
\sigma_t:=t\,s_t
\]
extends holomorphically and has a nonzero special value spanning
\[
{H^0(C_0\times C_0,\OO(1,1,1))^+}
\cong \bigwedge\nolimits^2H^0(C_0,L_0).
\]
Thus, for a basis $\alpha,\beta$ of $H^0(C_0,L_0)$,
\begin{equation}
\label{eq:sigma-zero-general-c}
\sigma_0=c\,\alpha\wedge\beta
\end{equation}
for a nonzero constant $c$.  In the normalized cyclic family used below,
the nondegeneracy of $B_\xi$ and the value of $c$ are computed explicitly
in Section~\ref{subsec:relative-normalization}.

For $t\neq 0$, multiplying the actual Szeg\H{o} kernel by the nonzero scalar $t$ does not change ranks.
{Choose a holomorphic nonvanishing generator $q_t$ of the relative
canonical-quadric line and set}
\[
{\omega_t=q_t\sigma_t\in\wedge^2H^0(C_t,K_{C_t}^{3/2}),}
\]
{where multiplication is understood through the kernel realizations.} Then:
\begin{itemize}[leftmargin=2em]
\item for $t\neq 0$, \(\omega_t\) has the same rank as {the bivector $s_tq_t$;}
\item \(\omega_t\) extends regularly to \(t=0\);
\item on the special fiber, \(\omega_0\) is the rank-$4$ bivector computed in the next section.
\end{itemize}

\section{The rank at theta nulls is 4}

The canonical model of a nonhyperelliptic genus $4$ curve is a complete intersection
\[
C=Q\cap \Gamma \subset \mathbf P^3
\]
of a quadric and a cubic. For a vanishing theta-null, the quadric is singular, hence a cone.

Let $(C_0,L_0)$ be a {nonhyperelliptic} vanishing theta-null of genus $4$, and choose a basis
\[
H^0(C_0,L_0)=\langle \alpha,\beta\rangle.
\]
Choose
\[
a\in H^0(C_0,K_{C_0})
\]
outside the image of
\[
\Sym^2 H^0(C_0,L_0)\subset H^0(C_0,K_{C_0}).
\]
Then
\[
H^0(C_0,K_{C_0})= H^0(C_0,L_0^2) = \langle \alpha^2,\alpha\beta,\beta^2,a\rangle
\]
and
\[
H^0(C_0,K_{C_0}^{3/2})
=  H^0(C_0,L_0^3) =
\langle \alpha^3,\alpha^2\beta,\alpha\beta^2,\beta^3,a\alpha,a\beta\rangle.
\]

\begingroup
{To see that these six elements form a basis of $H^0(C_0,L_0^3)$,
put $U:=H^0(C_0,L_0)$. After choosing $\alpha\wedge\beta$,
the basepoint-free pencil trick gives the short exact sequence}
\[
0\longrightarrow L_0\longrightarrow U\otimes L_0^2
\longrightarrow L_0^3\longrightarrow0.
\]
The kernel on global sections has dimension $2$, so the multiplication
image has dimension $2\cdot4-2=6=h^0(L_0^3)$.
{The products are spanned by $\alpha^3, \dots, a\beta$, which therefore form a basis.}
\endgroup

At the special fiber, the multiplication diagram factors through
\begin{equation}
m:\Sym^2 H^0(C_0,K_{C_0})\otimes \wedge^2 H^0(C_0,L_0)
\longrightarrow
\wedge^2 H^0(C_0,K_{C_0}^{3/2}).
\end{equation}

The singular quadric is
\[
Q_0=\alpha^2\cdot\beta^2-(\alpha\beta)\cdot(\alpha\beta).
\]
Applying $m$ gives
\begin{equation}
m(Q_0,\alpha\wedge\beta)
=
\alpha^3\wedge\beta^3-3\,\alpha^2\beta\wedge \alpha\beta^2.
\end{equation}

This lies in the $4$-dimensional subspace
\[
\Sym^3 H^0(C_0,L_0)
=
\langle \alpha^3,\alpha^2\beta,\alpha\beta^2,\beta^3\rangle
\subset H^0(C_0,K_{C_0}^{3/2}),
\]
and its contraction image is the whole of this subspace, so it has rank $4$.

Thus the limiting bivector $\omega_0$ has rank $4$.

\begin{lemma}\label{lem:Rdual}
Let
\[
V_0=H^0(C_0,K_{C_0}^{3/2}),
\]
and let
\[
R:=\ker\bigl(\iota(\omega_0):V_0^\vee\to V_0\bigr).
\]
Then there is a natural identification, well defined up to an overall scalar,
\[
R^\vee \cong H^0(C_0,L_0).
\]
\end{lemma}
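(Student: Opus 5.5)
The plan is to identify $R$ directly as the annihilator of the image of $\iota(\omega_0)$, and then to identify that image with $\Sym^3H^0(C_0,L_0)$ and the quotient $V_0/\Sym^3H^0(C_0,L_0)$ with $H^0(C_0,L_0)$ in a basis-free way.

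First, the image. For a skew form $\omega_0\in\wedge^2V_0$, the contraction map $\iota(\omega_0):V_0^\vee\to V_0$ is skew-adjoint. Hence its kernel is the annihilator of its image:
\[
R=\operatorname{im}\iota(\omega_0)^\perp\subset V_0^\vee.
\]
By the computation just above, $\omega_0=c\,m(Q_0,\alpha\wedge\beta)$. Its contraction image is exactly the four-dimensional subspace $\Sym^3U\subset V_0$, where $U=H^0(C_0,L_0)$. This uses the injectivity of $\Sym^3U\to H^0(L_0^3)$, which follows from the basis statement. Therefore
\[
R\cong (V_0/\Sym^3U)^\vee,
\qquad\text{so}\qquad
R^\vee\cong V_0/\Sym^3U.
\]
This part is canonical, with no scalar ambiguity. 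We should also check that the image does not depend on the choice of $\alpha,\beta$. This holds because $Q_0$ is the unique canonical quadric, and $\alpha\wedge\beta$ is determined up to scalar; both are intrinsic.

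Second, the quotient. I would identify $V_0/\Sym^3U$ with $U$ using the basepoint-free pencil trick sequence $0\to L_0\to U\otimes L_0^2\to L_0^3\to0$. Multiplication gives a surjection $U\otimes H^0(K_{C_0})\to V_0$. Composing with the quotient map by $\Sym^2U\subset H^0(K_{C_0})$ gives a map
\[
U\otimes\bigl(H^0(K_{C_0})/\Sym^2U\bigr)\longrightarrow V_0/\Sym^3U.
\]
The space $H^0(K_{C_0})/\Sym^2U$ is one-dimensional, spanned by the class of $a$. The map above is well defined, because $U\cdot\Sym^2U$ lands in $\Sym^3U$. It is surjective, because $V_0$ is spanned by $\Sym^3U$ together with $a\alpha$ and $a\beta$. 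Both sides have dimension $2$, so it is an isomorphism. This gives
\[
V_0/\Sym^3U\cong U\otimes\bigl(H^0(K_{C_0})/\Sym^2U\bigr)\cong U,
\]
where the last step requires trivializing a line. That trivialization is the only choice made, and it accounts for the ``up to an overall scalar'' in the statement.

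Main obstacle. The point needing care is that the line $H^0(K_{C_0})/\Sym^2U$ has no preferred generator. Changing $a$ to $\lambda a+(\text{element of }\Sym^2U)$ only rescales the resulting identification, since the $\Sym^2U$ part is killed modulo $\Sym^3U$. So naturality holds exactly up to an overall scalar.

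Combining the two steps gives $R^\vee\cong V_0/\Sym^3U\cong H^0(C_0,L_0)$, natural up to an overall scalar, as claimed.
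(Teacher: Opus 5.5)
Your proposal is correct and takes essentially the same approach as the paper: identify $R^\vee$ with $V_0/\operatorname{Im}$, observe that $\operatorname{Im}=\Sym^3U$, and use $V_0/\Sym^3U\cong\bigl(H^0(K_{C_0})/\Sym^2U\bigr)\otimes U$, where the choice of generator $[a]$ accounts for the overall scalar. The only differences are in presentation: you spell out the skew-adjointness argument giving $R=\operatorname{Im}^\perp$, which the paper uses tacitly, and you establish the intrinsic isomorphism first by a dimension count, whereas the paper starts from the explicit basis $[a\alpha],[a\beta]$.
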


\begin{proof}
Let
\[
\operatorname{Im}\subset V_0
\]
be the image of the contraction map \(\iota(\omega_0)\). Since
\[
{\omega_0=c\bigl(\alpha^3\wedge\beta^3-3\,\alpha^2\beta\wedge\alpha\beta^2\bigr),}
\]
we have
\[
\operatorname{Im}
=
\langle \alpha^3,\alpha^2\beta,\alpha\beta^2,\beta^3\rangle
=
\Sym^3 H^0(C_0,L_0).
\]
Hence
\[
R^\vee = V_0/\operatorname{Im}
\]
is spanned by the classes of \(a\alpha\) and \(a\beta\). Sending these classes to \(\alpha\) and \(\beta\) defines an isomorphism
\[
R^\vee \xrightarrow{\sim} H^0(C_0,L_0).
\]

This is well defined up to an overall scalar. Indeed, replacing \(a\) by
\[
a+\lambda \alpha^2+\mu \alpha\beta+\nu \beta^2
\]
changes \(a\alpha,a\beta\) by elements of \(\operatorname{Im}\), hence does not affect their classes in the quotient. Rescaling \(a\) rescales the resulting map.

Intrinsically,
\[
V_0/\operatorname{Sym}^3H^0(C_0,L_0)
\cong
\bigl(H^0(C_0,K_{C_0})/\operatorname{Sym}^2H^0(C_0,L_0)\bigr)
\otimes H^0(C_0,L_0),
\]
and the choice of the generator \([a]\) of the first factor gives the
stated identification, up to an overall scalar.

\end{proof}

\section{The first-order calculation}
\label{sec:first-order}

The purpose of this section is to prove Proposition~\ref{prop:cyclic-first-variation} below.
At a vanishing theta-null the limiting bivector $\omega_0$ has rank $4$,
with two-dimensional null space $R$.
We will exhibit a curve with vanishing theta-null
and a one-parameter deformation transverse to the theta-null divisor
for which the induced first-order alternating form on $R$ is nonzero.

Write
\begin{equation}
\label{eq:omega-family}
\omega_u=m_u(q_u,\sigma_u),
\end{equation}
where $q_u$ is the quadric through the canonical curve and
$\sigma_u=u s_u$ is the rescaled Szeg\H{o} kernel.
\begingroup
{Here $m_u$ is the multiplication map:}
\[
\begin{aligned}
m_u:\ I_2(K_{C_u})\otimes
H^0(C_u\times C_u,\OO(1,1,1))^+
&\longrightarrow \bigwedge\nolimits^2H^0(C_u,L_u^3),\\
q\otimes\sigma&\longmapsto q\sigma.
\end{aligned}
\]
{obtained from}
\[
\OO(2,2,-1)\otimes\OO(1,1,1)\to\OO(3,3,0)
\]
{by taking global sections}
and using the polarization of Section~3.2.
On the central fiber it agrees with the restriction of the multiplication
map $m$ from Section~5. It does not require the spaces $H^0(C_u,L_u)$
to form a vector bundle.
\endgroup

On the central fiber set
\[
U:=H^0(C_0,L_0),
\qquad
V:=H^0(C_0,K_{C_0}^{3/2}),
\qquad
\operatorname{Im}:=\operatorname{Sym}^3U\subset V.
\]
By Section~5, $\operatorname{Im}$ is precisely the image of the
contraction map defined by $\omega_0$.  Hence, if
\[
R:=\ker\bigl(\iota(\omega_0):V^\vee\longrightarrow V\bigr),
\]
then
\[
R^\vee\cong V/\operatorname{Im}.
\]

\begingroup
Since $H^1(C_u,L_u^3)=0$, the spaces $H^0(C_u,L_u^3)$ form a vector
bundle, {although the $H^0(C_u,L_u)$ jump.} A holomorphic first-order trivialization gives a representative
$\dot\omega_0\in\bigwedge^2V$. We differentiate the total family
$\omega_u=m_u(q_u,\sigma_u)$, including the variation of the
multiplication map as well as that of its arguments. We compute
their combined contribution through the global Gaussian-map
identity
{\eqref{eq:total-gaussian-identity}}
in Subsection~\ref{subsec:relative-first-jet}.
{Injectivity of the relevant part $\overline{\gamma}_0$ of the Gaussian map
follows from Lemma \ref{lem:gaussian-kernel-cyclic}.
This allows us, in Proposition \ref{prop:relative-first-jet},
to determine the intrinsic part of the variation.
This intrinsic part is the first normal symbol:}
\endgroup

\begin{equation}
\label{eq:intrinsic-first-normal-symbol}
\nu:=[\dot\omega_0]
\in
\frac{\bigwedge^2V}{\operatorname{Im}\wedge V}
\cong
\bigwedge\nolimits^2(V/\operatorname{Im})
\cong
\bigwedge\nolimits^2R^\vee.
\end{equation}
Indeed, changing the first-order trivialization changes
$\dot\omega_0$ by $T\cdot\omega_0$ for some $T\in\operatorname{End}(V)$.
Since $\omega_0\in\bigwedge^2\operatorname{Im}$, one has
${T}\cdot\omega_0\in\operatorname{Im}\wedge V$, so the class $\nu$ is independent of
the chosen trivialization.

Our plan is as follows. Subsections~\ref{subsec:cyclic-theta-null}
and~\ref{subsec:quadric-smoothing} specify the central curve and {an}
explicit deformation with its cubic held fixed. We then establish the
Gaussian-map kernel calculation needed to detect the normal quotient.
Proposition~\ref{prop:relative-first-jet} differentiates a global
Gaussian-map identity using
first-order sections of $L_u^3$.
Subsection~\ref{subsec:relative-normalization} computes the polar
coefficient by Serre duality.

\subsection{A cyclic trigonal theta-null}
\label{subsec:cyclic-theta-null}

Let $C$ be a smooth {projective} cyclic trigonal curve of genus $4$ {with affine equation}
\begin{equation}
\label{eq:cyclic-trigonal}
C:\qquad y^3=P(x),
\end{equation}
where $P$ is a square-free polynomial of degree $5$.  There is a unique point $\infty$ above infinity, with
\begin{equation*}
\operatorname{ord}_{\infty}(x)=-3,
\qquad
\operatorname{ord}_{\infty}(y)=-5.
\end{equation*}
Set
\begin{equation*}
L:=\mathcal O_C(3\infty).
\end{equation*}
Then
\begin{equation*}
L^{\otimes 2}\simeq K_C,
\qquad
h^0(C,L)=2,
\end{equation*}
so $L$ is a vanishing theta-null on $C$.

Choose a basis
\begin{equation*}
H^0(C,L)=\langle \alpha,\beta\rangle
\end{equation*}
with $\beta=x\alpha$.  Normalize the identification $L^{\otimes 2}\simeq K_C$ by
\begin{equation*}
\alpha^2=\frac{dx}{y^2},
\end{equation*}
and define
\begin{equation*}
a:=y\alpha^2=\frac{dx}{y}.
\end{equation*}
Then
\begin{equation}
\label{eq:cyclic-H0K}
H^0(C,K_C)
= H^0(C,L^2) =
\langle \alpha^2,\alpha\beta,\beta^2,a\rangle,
\end{equation}
and
\begin{equation}
\label{eq:cyclic-H0K32}
V:=H^0(C,K_C^{3/2})
= H^0(C,L^3) =
\langle
\alpha^3,\alpha^2\beta,\alpha\beta^2,\beta^3,
a\alpha,a\beta
\rangle.
\end{equation}
{These bases follow from the pole orders at infinity. The canonical
map recovers $x$ and $y$ as the ratios of $\alpha\beta$ and $a$ to
$\alpha^2$, respectively. It is therefore birational, so $C$ is
nonhyperelliptic.}

As in Section~5, write
\begin{equation*}
\operatorname{Im}:=\operatorname{Sym}^3H^0(C,L)
=
\langle
\alpha^3,\alpha^2\beta,\alpha\beta^2,\beta^3
\rangle
\subset V.
\end{equation*}
For the rank-$4$ bivector $\omega_0$ of Section~5, Lemma~5.1 gives
\begin{equation*}
R^\vee\simeq V/\operatorname{Im}
=
\langle [a\alpha],[a\beta]\rangle.
\end{equation*}
Hence
\begin{equation}
\label{eq:wedge-null-generator}
\bigwedge\nolimits^2R^\vee
=
\mathbb C\,[a\alpha]\wedge[a\beta].
\end{equation}
The Wronskian of the trigonal pencil is
\begin{align}
\label{eq:w-equals-a2}
w
&:=\alpha\,d\beta-\beta\,d\alpha \\
&=\alpha^2\,dx
=\frac{(dx)^2}{y^2}
=a^2.
\end{align}

\subsection{The canonical quadric and its transverse smoothing}
\label{subsec:quadric-smoothing}

At the theta-null, the canonical quadric is
\begin{equation}
\label{eq:q0-cyclic}
q_0
=
\alpha^2\cdot\beta^2-(\alpha\beta)^2.
\end{equation}
\begingroup
It is a rank-$3$ quadric whose radical in $H^0(C,K_C)^*$ is the line
generated by the covector dual to $a$.

We now fix an explicit smoothing, which will be used in both the
first-jet and the normalization calculations. Write
$P(x)=\sum_{i=0}^5p_ix^i$ and use canonical coordinates
\[
X_0=\alpha^2,\qquad X_1=\alpha\beta,\qquad
X_2=\beta^2,\qquad X_3=a.
\]
Set
\begin{equation}
\label{eq:fixed-homogeneous-cubic}
\begin{aligned}
H={}&p_0X_0^3+p_1X_0^2X_1+p_2X_0^2X_2
      +p_3X_0X_1X_2 +p_4X_0X_2^2+p_5X_1X_2^2, \\
G={}&X_3^3-H,
\end{aligned}
\end{equation}
and keep $G$ fixed in the family
\begin{equation}
\label{eq:explicit-Q-u}
C_u=\{Q_u=G=0\}\subset\mathbf P^3,
\qquad Q_u=X_0X_2-X_1^2+uX_3^2.
\end{equation}
The central complete intersection is the smooth canonical model of
$y^3=P(x)$. After shrinking the parameter disc, all fibers are smooth,
and adjunction gives $K_{C_u}\simeq\OO_{C_u}(1)$, normalized on the
central fiber by $X_0=dx/y^2$. The spin structure $L$ extends over
this disc, since the rigidified spin stack is finite \'etale over
curve moduli. We choose such a spin lift $(C_u,L_u)$.

\begingroup
The geometric meaning of this smoothing can also be seen from the
two rulings of the nearby smooth quadrics. After the base change
$u=r^2$, set
\begin{align*}
X_{00}&=X_0,
&
X_{11}&=X_2,
\\
X_{01}&=X_1+rX_3,
&
X_{10}&=X_1-rX_3.
\end{align*}
Then
\[
Q_{r^2}=X_{00}X_{11}-X_{01}X_{10}.
\]
For $r\neq0$, this is the Segre equation: its two rulings cut out
the two trigonal pencils on $C_{r^2}$. The involution $r\mapsto-r$
interchanges $X_{01}$ and $X_{10}$, transposes the Segre matrix,
and exchanges the two rulings. At $r=0$, both specialize to the
unique ruling of the quadric cone, which cuts out the pencil
$|L|=\mathbf P\langle\alpha,\beta\rangle$ on the central curve.

The symmetric and divided antisymmetric combinations are
\[
S_r:=\frac{X_{01}+X_{10}}{2}=X_1,
\qquad
D_r:=\frac{X_{01}-X_{10}}{2r}=X_3.
\]
The second expression extends regularly across $r=0$, and on
the central fiber we have
\[
S_0=\alpha\beta,\qquad D_0=a.
\]
Thus the equation
\[
X_{00}X_{11}-S_r^2+r^2D_r^2=0
\]
recovers the normalization $u=r^2$ directly from the two rulings.
The explicit family above realizes this normalization with the
chosen central section $a$, and fixes the cubic $G$ for the
subsequent deformation calculations.
\endgroup

In the resulting canonical frame the generator $q_u$ is exactly
\begin{equation}
\label{eq:quadric-normalized-smoothing}
q_u=X_0\cdot X_2-X_1\cdot X_1+uX_3\cdot X_3.
\end{equation}
Here $\dot q_0=a\cdot a$, so its coefficient in the normal direction is
the number
\begin{equation}
\label{eq:lambda-one}
\lambda=1.
\end{equation}
The tensor $a\cdot a\in\operatorname{Sym}^2H^0(C,K_C)$ must be
distinguished from its ordinary product $a^2=w\in H^0(C,K_C^2)$.
More intrinsically, the normal direction is
$\operatorname{Sym}^2(H^0(C,K_C)/\operatorname{Sym}^2H^0(C,L))$;
the tensor $a\cdot a$ represents its generator.
Subsection~\ref{subsec:relative-normalization} verifies directly that
the spin deformation is transverse to the reduced theta-null divisor.

Let $\sigma_u=us_u$ be the rescaled Szeg\H{o} kernel, with central
value $\sigma_0=c\,\alpha\wedge\beta$. Use the kernel line bundle $E$ constructed in Section~4, with
$E_u=H^0(C_u\times C_u,\OO(1,1,1))^+$, and let
$r_u:E_u\to\mathbb C$ be the relative residue map.
In a first-order trivialization of $E$ one has
\begin{equation}
\label{eq:first-residue-jet}
r_u(\sigma_u)=u,\qquad
\dot r_0(\sigma_0)+r_0(\dot\sigma_0)=1,
\qquad r_0=0.
\end{equation}
Thus $\dot r_0(\sigma_0)=1$; there is no global central-fiber kernel
of residue $1$. Indeed, the exact sequence in Section~4 gives
$H^0(C\times C,\OO(1,1,1))^+=\mathbb C(\alpha\wedge\beta)$, and
every section of this space has residue zero. Local derivatives with
a polar part must retain their transition data.

For local diagonal calculations put $\delta=x_1-x_2$. Let
$\varepsilon\in\{1,-1\}$ be the sign fixed by this choice and the
residue convention: after suppressing half-form frames, a local
principal part of residue $1$ is $\varepsilon/\delta$.
Accordingly, the actual relative kernel $\sigma_u$ has local polar
part $\varepsilon u/\delta$. Only $\varepsilon^2=1$ will be needed.
\endgroup

\subsection{The Gaussian map for a general cyclic trigonal curve}
\label{subsec:gaussian-general-cyclic}

The Gaussian map is
\begin{equation}
\label{old:eq:gaussian-map}
\gamma:
\bigwedge\nolimits^2V
\longrightarrow
H^0(C,K_C^4),
\qquad
\gamma(f\wedge g)=f\,dg-g\,df.
\end{equation}
{Here, if $f=F\ell$ and $g=G\ell$ in a local frame $\ell$ of $L^3$,
the expression means $(F\,dG-G\,dF)\ell^2$. Terms involving the
derivative of the frame cancel; the result is a section of
$L^6\otimes K_C=K_C^4$, with no connection chosen.}
In this subsection we compute the kernel of $\gamma$ and conclude that
$\ker(\gamma)\subset\bigwedge\nolimits^2\operatorname{Im}.$

Set
\begin{equation*}
e_0=\alpha^3,
\quad
e_1=\alpha^2\beta,
\quad
e_2=\alpha\beta^2,
\quad
e_3=\beta^3,
\quad
e_4=a\alpha,
\quad
e_5=a\beta.
\end{equation*}
Then
\begin{equation*}
\operatorname{Im}=\langle e_0,e_1,e_2,e_3\rangle,
\qquad
R^\vee=\langle[e_4],[e_5]\rangle.
\end{equation*}
Since $\beta=x\alpha$ and $a=y\alpha^2$, we may rewrite this as:
\begin{equation*}
e_0=\alpha^3,
\quad
e_1=x\alpha^3,
\quad
e_2=x^2\alpha^3,
\quad
e_3=x^3\alpha^3,
\quad
e_4=y\alpha^3,
\quad
e_5=xy\alpha^3.
\end{equation*}

\begin{lemma}
\label{lem:gaussian-kernel-cyclic}
For a general square-free polynomial $P$ of degree $5$, the kernel of
\begin{equation*}
\gamma:\bigwedge\nolimits^2V\longrightarrow H^0(C,K_C^4)
\end{equation*}
is one-dimensional and is generated by
\begin{equation}
\label{eq:gaussian-kernel-generator}
e_1\wedge e_2-\frac13e_0\wedge e_3.
\end{equation}
In particular,
\begin{equation*}
\ker(\gamma)\subset\bigwedge\nolimits^2\operatorname{Im}.
\end{equation*}
\end{lemma}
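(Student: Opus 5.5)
The plan is to compute $\gamma$ directly in the affine coordinates of \eqref{eq:cyclic-trigonal}, using the description $e_i=f_i\,\alpha^3$ with
\[
(f_0,\dots,f_5)=(1,x,x^2,x^3,y,xy).
\]
Since $\gamma$ does not depend on the local frame, we may take the frame $\alpha^3$. Then
\[
\gamma(e_i\wedge e_j)=W(f_i,f_j)\,\alpha^6\,dx,
\qquad
W(f,g)=f\,g'-g\,f',
\]
where $'=d/dx$ and $y'=P'(x)/(3y^2)$ from $y^3=P$. Because $\alpha^6dx$ is a fixed nonzero meromorphic section, $\ker\gamma$ is the space of linear relations among the fifteen functions $y^2W(f_i,f_j)$. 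These all lie in $\mathbb C[x,y]/(y^3-P)$, which is a free $\mathbb C[x]$-module with basis $1,y,y^2$. So I will sort the fifteen Wronskians by their component in this basis.

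I expect the following three families, one for each component:
\begin{itemize}[leftmargin=2em]
\item \emph{Pairs inside $\operatorname{Im}$.} For $f_i=x^a$, $f_j=x^b$ one gets $y^2W=(b-a)x^{a+b-1}y^2$, which lies purely in the $y^2$-component.
\item \emph{The pair $(y,xy)$.} Here $W=y^2$, so $y^2W=y^4=Py$, which lies purely in the $y$-component.
\item \emph{Mixed pairs.} For $f_i=x^a$ with $0\le a\le3$ and $f_j=x^by$ with $b\in\{0,1\}$ one gets
\[
y^2W=(b-a)x^{a+b-1}P+\tfrac13x^{a+b}P',
\]
which lies purely in the $1$-component.
\end{itemize}
Hence $\ker\gamma$ splits as the direct sum of the kernels on $\bigwedge^2\operatorname{Im}$, on $\mathbb C\,e_4\wedge e_5$, and on $\operatorname{Im}\otimes\langle e_4,e_5\rangle$.

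The second summand contributes nothing, since $Py\neq0$. On $\bigwedge^2\operatorname{Im}$ the six images are
\[
1,\quad 2x,\quad 3x^2,\quad x^2,\quad 2x^3,\quad x^4,
\]
for $e_0\wedge e_1$, $e_0\wedge e_2$, $e_0\wedge e_3$, $e_1\wedge e_2$, $e_1\wedge e_3$, $e_2\wedge e_3$ respectively. The only coincidence of monomials is $3x^2$ versus $x^2$, so the kernel is exactly the line spanned by $e_1\wedge e_2-\tfrac13e_0\wedge e_3$. This holds for every $P$.

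The main step is to show that the eight mixed Wronskians are linearly independent over $\mathbb C$ for general $P$. Injectivity is an open condition on the coefficients of $P$, so it suffices to exhibit one $P$ of degree $5$ for which it holds, and square-freeness plays no role in this linear-algebra check. My first attempt is $P=x^5+1$. Writing $s=a+b$, the mixed elements become
\[
x^{s-1}\bigl((b-a+\tfrac53)x^5+(b-a)\bigr),\qquad 0\le s\le4.
\]
For each $s$, the pairs occurring are $(a,b)=(s,0)$ and $(s-1,1)$, with $b-a=-s$ or $2-s$. I would then order these eight polynomials by their top-degree monomial $x^{s+4}$ and bottom-degree monomial $x^{s-1}$ and check triangularity. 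Care is needed when both values of $b-a$ occur for the same $s$, and when a coefficient $b-a+\tfrac53$ or $b-a$ vanishes (for example $s=0$, where only $P'/3=\tfrac53x^4$ appears).

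If this particular $P$ happens to produce an accidental dependence, I would instead take a $P$ with an extra generic coefficient, for instance $x^5+tx+1$. I would then compute the $8\times8$ determinant, or its lowest-order term in $t$, and show it is nonzero. This is the only place where generality of $P$ enters. Combining the three summands gives $\ker\gamma=\mathbb C\,(e_1\wedge e_2-\tfrac13e_0\wedge e_3)\subset\bigwedge^2\operatorname{Im}$.
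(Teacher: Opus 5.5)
Your proposal is correct and follows the paper's proof essentially step for step. The common steps are: the frame $\alpha^3$, clearing denominators by $y^2$ (the paper uses $3y^2$), splitting the images into the $1$, $y$, $y^2$ components over $\mathbb C(x)$, and reducing the mixed block to one test polynomial by Zariski-openness.

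Your test polynomial $P=x^5+1$ (the paper uses $x^5-x+1$) works, so you do not need the fallback. The mixed images for different $s=a+b$ involve the disjoint monomial pairs $\{x^{s-1},x^{s+4}\}$. Each $s$ with two pairs gives a $2\times 2$ block with determinant $\pm\tfrac53(d_2-d_1)=\pm\tfrac{10}{3}\neq 0$, where $d_1=-s$ and $d_2=2-s$ are the two values of $b-a$. The blocks for $s=0$ and $s=4$ are single nonzero polynomials.
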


\begin{proof}
Write $e_i=f_i\alpha^3$, where
\[
(f_0,f_1,f_2,f_3,f_4,f_5)=(1,x,x^2,x^3,y,xy).
\]
On the open set where $x$ is a coordinate, we have:
\[
{\gamma(e_i\wedge e_j)
=\alpha^6(f_i\partial_xf_j-f_j\partial_xf_i)\,dx.}
\]
Factoring out the common nonzero rational $4$-canonical factor
$\alpha^6 dx$, we are reduced to finding linear relations among the coefficient functions
$f_i\partial_xf_j-f_j\partial_xf_i$, in the function field $\mathbb C(C)$.

The only denominators in these coefficient functions arise from
\[
\frac{dy}{dx}=\frac{P'(x)}{3y^2}.
\]
It is therefore convenient to clear them by multiplying every coefficient
by $3y^2$ and to write
\begin{equation*}
{\widetilde\gamma(\eta):=\frac{3y^2\gamma(\eta)}{\alpha^6dx}\in\mathbb C(C).}
\end{equation*}
Here $3y^2$ is invertible in the function field,
so multiplication by it does not alter the linear relations among the images.

The relation $y^3=P(x)$ leads to a useful decomposition of
the function field $\mathbb C(C)$ as a $\mathbb C(x)$-vector space:
\begin{equation}
\label{eq:function-field-cyclic-decomposition}
\mathbb C(C)
=
\mathbb C(x)\oplus y\mathbb C(x)\oplus y^2\mathbb C(x).
\end{equation}
The summands are the three eigenspaces for the cyclic automorphism
$(x,y)\mapsto(x,\zeta y)$, where $\zeta^3=1$ {and $\zeta\neq1$}.

We now compute the images under $\widetilde \gamma$ of the $e_i \wedge e_j$.
For the six wedges involving only $e_0,e_1,e_2,e_3$, one obtains:
\begin{equation*}
\begin{array}{c|c}
\text{bivector} & \widetilde\gamma \\ \hline
 e_0\wedge e_1 & 3y^2 \\
 e_0\wedge e_2 & 6xy^2 \\
 e_0\wedge e_3 & 9x^2y^2 \\
 e_1\wedge e_2 & 3x^2y^2 \\
 e_1\wedge e_3 & 6x^3y^2 \\
 e_2\wedge e_3 & 3x^4y^2
\end{array}
\end{equation*}
Thus these six images span a five-dimensional space, and their unique relation,
consistent with \eqref{eq:gaussian-kernel-generator}, is:
\begin{equation*}
\widetilde\gamma(e_1\wedge e_2)
=\frac13\widetilde\gamma(e_0\wedge e_3).
\end{equation*}
Write $P'(x)$ for the derivative of $P(x)$.  For the eight mixed wedges,
with $0\leq i\leq3$ and $j=0,1$, we use
$e_i=x^i\alpha^3$ and $e_{4+j}=x^jy\alpha^3$.  After suppressing the
common factor as above, the calculation is
\begin{equation}
\label{eq:mixed-gaussian-general-P}
\begin{aligned}
\widetilde\gamma (e_i \wedge e_{j+4})
&=
3y^2\bigl(x^i\,\partial_x(x^jy)-x^jy\,\partial_x(x^i)\bigr) \\
&=
3(j-i)x^{i+j-1}y^3+x^{i+j}P'(x) \\
&=
3(j-i)x^{i+j-1}P(x)+x^{i+j}P'(x),
\end{aligned}
\end{equation}
where a term with exponent $-1$ is absent because its coefficient is zero.  For general $P$, these eight {mixed} polynomials are linearly independent.  Indeed, linear independence is a Zariski-open condition on the coefficients of $P$, so it suffices to show linear independence for one {choice} of $P(x)$. If we take $P(x)=x^5-x+1$, the eight images become:
\begin{equation*}
\begin{array}{c|l}
\text{bivector} & \widetilde\gamma \\ \hline
 e_0\wedge e_4 & 5x^4-1 \\
 e_0\wedge e_5 & 8x^5-4x+3 \\
 e_1\wedge e_4 & 2x^5+2x-3 \\
 e_1\wedge e_5 & 5x^6-x^2 \\
 e_2\wedge e_4 & -x^6+5x^2-6x \\
 e_2\wedge e_5 & 2x^7+2x^3-3x^2 \\
 e_3\wedge e_4 & -4x^7+8x^3-9x^2 \\
 e_3\wedge e_5 & -x^8+5x^4-6x^3,
\end{array}
\end{equation*}
{For this square-free $P$, comparison of coefficients shows that
the eight displayed polynomials are linearly independent.}

Finally,
\begin{equation*}
\widetilde\gamma(e_4\wedge e_5)=3y^4=3P(x)y,
\end{equation*}
which is nonzero.  The three groups of images considered above lie,
respectively, in the three distinct summands
$y^2\mathbb C(x)$, $\mathbb C(x)$, and $y\mathbb C(x)$ of
\eqref{eq:function-field-cyclic-decomposition}.  Thus there are no
relations between the three groups.  Their dimensions are $5$, $8$, and
$1$, respectively, so
\begin{equation*}
\operatorname{rank}(\gamma)=14.
\end{equation*}
Since $\dim\bigwedge^2V=15$, the kernel is one-dimensional, generated by \eqref{eq:gaussian-kernel-generator}.
\end{proof}

\subsection{The relative first-jet calculation}
\label{subsec:relative-first-jet}

\begingroup
For the rest of the proof fix $P(x)=x^5-x+1$, so that
Lemma~\ref{lem:gaussian-kernel-cyclic} applies, and use precisely the
family \eqref{eq:explicit-Q-u}, with its cubic fixed and $\lambda=1$.
In particular, we assert the first-order formula for this specified
family, which suffices for the existence statement below.

\begin{proposition}[Relative first-jet lemma]
\label{prop:relative-first-jet}
For this family, with $\sigma_0=c\,\alpha\wedge\beta$ and the residue
sign $\varepsilon$ fixed in Subsection~\ref{subsec:quadric-smoothing},
the intrinsic first normal symbol is
\begin{equation}
\label{eq:relative-first-jet-formula}
\nu=(3c-\varepsilon)[a\alpha]\wedge[a\beta]
\quad\text{in}\quad\bigwedge\nolimits^2R^\vee.
\end{equation}
\end{proposition}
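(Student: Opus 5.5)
The plan is to differentiate the identity $\gamma_u(\omega_u)=\Phi_u$ at $u=0$, where $\gamma_u$ is the Gaussian map of $L_u^3$ and $\Phi_u\in H^0(C_u,K_{C_u}^4)$ is a global expression computed directly from the kernels. Then I will read off the normal part of $\dot\omega_0$ using Lemma~\ref{lem:gaussian-kernel-cyclic}.

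Concretely, the diagonal restriction of $\omega_u=q_u\sigma_u\in H^0(\OO(3,3,0))^+$ vanishes, since the kernel is antisymmetric. Its first normal derivative along $\Delta$ is, up to the sign $\varepsilon$ and the factor $2$ from the polarization convention of Section~3.2, the Gaussian image $\gamma_u(\omega_u)$. On the other hand, this derivative is computed from the local expansion of the product. Because $q_u\in H^0(\OO(2,2,-1))^+$ vanishes on $\Delta$, the product of the polar part $\varepsilon u/\delta$ of $\sigma_u$ with the linear term of $q_u$ along $\Delta$ contributes $\varepsilon u$ times the second-order restriction of $q_u$. That restriction is essentially the Gaussian-type image of $q_u$ in $H^0(K^4)$.

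This gives an identity of the form
\begin{equation}\label{eq:total-gaussian-identity}
\gamma_u(\omega_u)=\varepsilon u\,\mu_2(q_u)+(\text{regular part of }\sigma_u\text{ against }q_u),
\end{equation}
where $\mu_2$ is the second Gaussian (Wahl-type) map on $I_2(K)$. I would first check this at $u=0$. There $\sigma_0=c\,\alpha\wedge\beta$ is regular, and both sides reduce to $\gamma_0(\omega_0)$ computed from $m(q_0,\alpha\wedge\beta)$. This serves as a consistency check.

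Next I differentiate in $u$, using global first-order lifts of the basis $e_0,\dots,e_5$ of $H^0(L_u^3)$; these exist because $H^1(L_u^3)=0$. The derivative $\dot\omega_0$ satisfies
\[
\gamma_0(\dot\omega_0)=-\dot\gamma_0(\omega_0)+\varepsilon\,\mu_2(q_0)+\frac{d}{du}\Big|_0(\text{regular term}).
\]
The regular term's derivative involves $\dot q_0=a\cdot a$ against $\sigma_0=c\,\alpha\wedge\beta$, which yields $c\,m(a\cdot a,\alpha\wedge\beta)$. By the same computation as in Section~5, up to the polarization factor, this is a multiple $3c$ of $a\alpha\wedge a\beta$ modulo $\operatorname{Im}\wedge V$. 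It also involves $q_0$ against $\dot\sigma_0$, which lies in $\operatorname{Im}\wedge V$ since $q_0$ only involves $\alpha,\beta$.

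The term $\varepsilon\mu_2(q_0)$ must be shown to equal $-\varepsilon\gamma_0(a\alpha\wedge a\beta)$ modulo $\gamma_0(\operatorname{Im}\wedge V)$, with $\dot\gamma_0(\omega_0)$ absorbed into that image. For this I compute in the coordinates $x,y$ with $P=x^5-x+1$, using the eigenspace decomposition \eqref{eq:function-field-cyclic-decomposition}. In that decomposition $\gamma(e_4\wedge e_5)=3P\,y$ is the only contribution in the $y\,\mathbb C(x)$ summand. Both $\mu_2(q_0)$ (built from $w=a^2$, since $Q$ degenerates toward $a\cdot a$) and the relevant pieces can then be compared by their $y$-component alone.

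Finally I invert $\gamma_0$. By Lemma~\ref{lem:gaussian-kernel-cyclic}, $\ker\gamma_0\subset\bigwedge^2\operatorname{Im}\subset\operatorname{Im}\wedge V$, so $\gamma_0$ induces an injection $\overline\gamma_0$ on $\bigwedge^2V/\operatorname{Im}\wedge V$. Hence the class $\nu=[\dot\omega_0]$ is determined by its $y$-component, giving $\nu=(3c-\varepsilon)[a\alpha]\wedge[a\beta]$.

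The main obstacle is establishing the identity \eqref{eq:total-gaussian-identity} with correct constants. This means tracking the transition data of the polar part of $\sigma_u$, which has no global central-fiber limit of residue $1$, and checking that its contribution is exactly $-\varepsilon[a\alpha]\wedge[a\beta]$ rather than some other multiple. I would first do this entirely in local coordinates on the affine chart $y^3=P(x)$, expanding in $\delta$ to second order.
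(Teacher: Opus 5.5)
There is a genuine gap: you reach $3c-\varepsilon$ only through two errors that cancel.

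\textbf{First error: the regular term.} The term ``regular part of $\sigma_u$ against $q_u$'' in your identity is identically zero. The kernel $\mathcal Q_u$ of $q_u$ is symmetric and vanishes on $\Delta$, so it vanishes there to order two; there is no ``linear term along $\Delta$''. Any regular factor times $\mathcal Q_u$ therefore has zero first diagonal jet.

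The identity is exactly $\gamma_u(\omega_u)=-\varepsilon u\,J^2_\Delta(\mathcal Q_u)$. Its $u$-derivative is $-\varepsilon J^2_\Delta(\mathcal Q_0)=-\varepsilon w^2$, because $\mathcal Q_0=\mathcal W^2$ with $\mathcal W$ the kernel of $\alpha\wedge\beta$. This has nothing to do with $a\cdot a$.

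So the $3c$ cannot come from ``$\dot q_0$ against $\sigma_0$''. That step also fails on its own terms. The expression $c\,m(a\cdot a,\alpha\wedge\beta)$ is a bivector, not a term in an identity in $H^0(K_C^4)$. By \eqref{eq:multiplication-formula} it equals $2c\,a\alpha\wedge a\beta$, not $3c\,a\alpha\wedge a\beta$.

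Likewise ``$q_0$ against $\dot\sigma_0$'' has no meaning, since there is no global central kernel of residue $1$. This is exactly why one differentiates the Gaussian identity rather than applying a product rule to $m_u(q_u,\sigma_u)$.

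\textbf{Second error: dismissing $\dot\gamma_0(\omega_0)$.} This is the decisive one: $\dot\gamma_0(\omega_0)$ is \emph{not} absorbed into $\gamma_0(\operatorname{Im}\wedge V)$.

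Its class modulo that space is independent of the first-order frame. Changing the frame by $T\in\operatorname{End}V$ alters it by $\gamma_0(T\cdot\omega_0)$, and $T\cdot\omega_0\in\operatorname{Im}\wedge V$. This class is nonzero: it is the actual source of the $3c$.

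The paper computes it from explicit global first-order lifts $E_0,\dots,E_3$ of $\alpha^3,\dots,\beta^3$, obtained from the spin transition $\tau=x(1-uy^2/2x^2)\rho$. The pencil sections $\alpha,\beta$ do not lift, but their cubes do, with $U_0$-coefficients $1$, $x$, $x^2-\tfrac12uy^2$, $x^3-\tfrac32uxy^2$. A direct computation gives $\gamma_u\bigl(c(E_0\wedge E_3-3E_1\wedge E_2)\bigr)=-3cu\,y^2\rho^6dx$. Hence $\dot\gamma_0(\omega_0)=-3c\,w^2$, which lies entirely in the $y\,\mathbb C(x)$ summand, so your own $y$-component test would have flagged it.

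Combining the two contributions gives $\gamma_0(\dot\omega_0)=(3c-\varepsilon)w^2$. Since $w^2=\gamma_0(e_4\wedge e_5)$, injectivity of $\overline\gamma_0$ then yields the formula; you handle that injectivity correctly via Lemma~\ref{lem:gaussian-kernel-cyclic}. What is missing from your proposal is the computation of the variation of the Gaussian map itself, which is the heart of the proof.
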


\begin{proof}
We first establish a global identity before differentiating. For any
$\eta=\sum_i f_i\wedge g_i\in\bigwedge^2H^0(C_u,L_u^3)$, its kernel is
\[
F_\eta(x_1,x_2)=\sum_i
\bigl(f_i(x_1)g_i(x_2)-g_i(x_1)f_i(x_2)\bigr).
\]
This is the K\"unneth identification with
$H^0(C_u\times C_u,\OO(3,3,0))^+$, independent of the expression of
$\eta$ as a sum. Its ordinary anti-invariance makes it vanish on
the diagonal. Thus it is a section of $\OO(3,3,-1)$; its restriction
to the diagonal, under
$\OO(3,3,-1)|_\Delta\simeq K_{C_u}^4$, defines
$J_\Delta(F_\eta)$. Locally it is the coefficient of
$\delta=x_1-x_2$, with the corresponding differential frames restored.
For a decomposable bivector, Taylor expansion at $x_2=x_1-\delta$ gives
\[
F_{f\wedge g}(x,x-\delta)
=-\delta(fg'-gf')+O(\delta^2).
\]
{Therefore, by linearity, for every bivector $\eta$ one has}
\begin{equation}
\label{eq:gaussian-diagonal-jet}
J_\Delta(F_\eta)=-\gamma_u(\eta).
\end{equation}

Let $\mathcal Q_u$ be the symmetric kernel representing $q_u$ under
the unaveraged polarization of Section~3.2. Since it vanishes on the
diagonal and is symmetric, it vanishes there to order at least two.
As a section of $\OO(2,2,-2)$, it restricts on the diagonal to
$\OO(2,2,-2)|_\Delta\simeq K_{C_u}^4$.
This restriction is its second diagonal jet, the global section
\[
J_\Delta^2(\mathcal Q_u)\in H^0(C_u,K_{C_u}^4)
\]
locally represented by $(\mathcal Q_u/\delta^2)|_\Delta$ with frames
restored. Pointwise multiplication of kernels represents $\omega_u$.
For comparison, for $P,Q\in H^0(C,K_C)$ and $v,z\in H^0(C,L)$,
our convention on global decomposable tensors gives
\begin{equation}
\label{eq:multiplication-formula}
m(P\cdot Q,v\wedge z)=Pv\wedge Qz+Qv\wedge Pz.
\end{equation}
The relative kernel $\sigma_u$ has residue $u$ and local polar part
$\varepsilon u/\delta$. Its regular part cannot contribute to the
first jet after multiplication by $\mathcal Q_u$, which vanishes to
second order. Equation~\eqref{eq:gaussian-diagonal-jet} therefore gives
the identity of
global sections
\begin{equation}
\label{eq:total-gaussian-identity}
\gamma_u(\omega_u)=
-J_\Delta(F_{\omega_u})=
-\varepsilon u J_\Delta^2(\mathcal Q_u).
\end{equation}

At $u=0$, set
\[
\mathcal W(x_1,x_2)=
\alpha(x_1)\beta(x_2)-\beta(x_1)\alpha(x_2).
\]
The polarization convention gives
\begin{equation}
\label{q0w2}
\begin{aligned}
\mathcal Q_0
={}&\alpha(x_1)^2\beta(x_2)^2+\beta(x_1)^2\alpha(x_2)^2\\
&-2\alpha(x_1)\beta(x_1)\alpha(x_2)\beta(x_2)
=\mathcal W^2.
\end{aligned}
\end{equation}
Since $(\mathcal W/\delta)|_\Delta=-w$, we obtain
$J_\Delta^2(\mathcal Q_0)=w^2$. The spaces $H^0(C_u,L_u^3)$ and $H^0(C_u,K_{C_u}^4)$ form vector
bundles, since the corresponding $H^1$ groups vanish. The maps
$\gamma_u$ form a holomorphic bundle morphism. Differentiating
\eqref{eq:total-gaussian-identity} in first-order frames of these
bundles yields
\begin{equation}
\label{eq:total-gaussian-derivative}
\gamma_0(\dot\omega_0)+\dot\gamma_0(\omega_0)
=-\varepsilon w^2.
\end{equation}
We must compute the second term, which records the variation of the
Gaussian map itself.

For that purpose work over $\mathbb C[u]/(u^2)$ and construct global
first-order lifts of $e_0,e_1,e_2,e_3$. The charts
$U_0=\{X_0\neq0\}$ and $U_\infty=\{X_2\neq0\}$ cover the family:
if $X_0=X_2=0$, the fixed cubic forces $X_3=0$, and the quadric then
forces $X_1=0$. Use coordinates
\[
x=X_1/X_0,\quad y=X_3/X_0,\qquad
s=X_1/X_2,\quad v=X_3/X_2.
\]
Choose local spin frames $\rho,\tau$ such that
\[
\rho^2=X_0,\qquad \tau^2=X_2,\qquad
\rho|_{u=0}=\alpha,\quad\tau|_{u=0}=\beta.
\]
The two charts are affine, so their central line-bundle frames lift to
the first-order thickenings. Rescaling by square roots of units
congruent to $1$ then gives the required identities for their squares.
On the overlap, modulo $u^2$,
\begin{equation}
\label{eq:spin-transition-first-order}
\tau=x\left(1-\frac{uy^2}{2x^2}\right)\rho,
\qquad
x=\frac{s}{s^2-uv^2},\quad y=\frac{v}{s^2-uv^2}.
\end{equation}
The following table gives global sections $E_i$ of $L_u^3$ to first
order, by their coefficients in these frames:
\begin{equation}
\label{eq:global-first-order-lifts}
\begin{array}{c|c|c}
 E_i &\text{coefficient} \text{ in }\rho^3\text{ on }U_0
 &\text{coefficient in }\tau^3\text{ on }U_\infty\\ \hline
E_0&F_0=1&s^3-\frac32usv^2\\
E_1&F_1=x&s^2-\frac12uv^2\\
E_2&F_2=x^2-\frac12uy^2&s\\
E_3&F_3=x^3-\frac32uxy^2&1
\end{array}
\end{equation}
Indeed the coefficient conversion is
\[
F(x,y,u)\longmapsto
(s^2-uv^2)^{3/2}
F\left(\frac{s}{s^2-uv^2},\frac{v}{s^2-uv^2},u\right),
\]
where the square root specializes to $s$. Expanding modulo $u^2$
gives exactly the right column, whose entries are all regular.
Thus the $E_i$ are global lifts of the four sections of $L^3$,
even though $\alpha,\beta$ themselves do not lift globally as sections
of $L$ in this transverse deformation.

Since $H^1(C,L^3)=0$, we can complete these sections to a first-order frame
$E_0,\ldots,E_5$ by any lifts of $e_4=a\alpha$ and $e_5=a\beta$;
this frame can be lifted holomorphically after shrinking the {base}.
By Section~5, in the central frame
\[
\omega_0=c\Omega_0,\qquad
\Omega_0=e_0\wedge e_3-3e_1\wedge e_2,
\qquad\gamma_0(\omega_0)=0.
\]
In this frame the constant-coordinate lift of $\omega_0$ is
$c(E_0\wedge E_3-3E_1\wedge E_2)$. On the dense open subset where
$x$ is a relative coordinate, the Gaussian formula gives, with
$y_x=\partial_xy$,
{
\[
\begin{aligned}
&(\partial_x F_3) F_0 -(\partial_x F_0) F_3 -3((\partial_x F_2) F_1 - (\partial_x F_1) F_2) \\
&\quad=\partial_x\left(x^3-\tfrac32uxy^2\right)
-3\left\{x\partial_x\left(x^2-\tfrac12uy^2\right)
-\left(x^2-\tfrac12uy^2\right)\right\}\\
&\quad=3x^2-\tfrac32u(y^2+2xyy_x)
-3\left(x^2-uxyy_x+\tfrac12uy^2\right)
=-3uy^2\pmod{u^2}.
\end{aligned}
\]}
All $y_x$ terms cancel.
{The intrinsic version is a formula for the value of $\gamma_u$ in
$H^0(C_u,K_{C_u}^4)$.
As we saw above, the first diagonal jet is obtained by restricting
a section of $\OO(3,3,-1)$ to $\Delta$, with
$\OO(3,3,-1)|_\Delta\simeq K_{C_u}^4$:
the two factors of $L_u^3$ contribute $K_{C_u}^3$,
and the conormal factor $\OO(-\Delta)|_\Delta\simeq K_{C_u}$
contributes $dx$.
Going from our frame-dependent formula above to the intrinsic version
therefore involves two factors of $\rho^3$ and one factor $dx$.
In other words:}
\[
\gamma_u\bigl(c(E_0\wedge E_3-3E_1\wedge E_2)\bigr)
=-3cu\,y^2\rho^6\,dx\pmod{u^2}.
\]
Since the right-hand side already has a factor $u$, extracting its
coefficient of $u$ restricts all remaining factors to the central fiber,
where $\rho|_{u=0}=\alpha$. As $\alpha^2=dx/y^2$, we have
$y^2\alpha^6dx=w^2$, and hence
\begin{equation}
\label{eq:gaussian-map-variation}
\dot\gamma_0(\omega_0)=-3c\,w^2.
\end{equation}
This computation on a dense open set is an equality of global
sections. Variations of the target frame make no contribution because
$\gamma_0(\omega_0)=0$. Combining
\eqref{eq:total-gaussian-derivative} and
\eqref{eq:gaussian-map-variation} gives
\begin{equation}
\label{eq:gaussian-total-normal-image}
\gamma_0(\dot\omega_0)=(3c-\varepsilon)w^2.
\end{equation}

Finally, the induced map
\[
\overline\gamma_0:
\bigwedge\nolimits^2(V/\operatorname{Im})
\longrightarrow H^0(C,K_C^4)/\gamma_0(\operatorname{Im}\wedge V)
\]
is injective. Indeed, if $\gamma_0(\eta)=\gamma_0(\zeta)$ for some
$\zeta\in\operatorname{Im}\wedge V$, then
Lemma~\ref{lem:gaussian-kernel-cyclic} implies
\[
\eta-\zeta\in\ker\gamma_0
\subset\bigwedge\nolimits^2\operatorname{Im}
\subset\operatorname{Im}\wedge V,
\]
so $[\eta]=0$. Moreover,
\[
\gamma_0(e_4\wedge e_5)
=a^2(\alpha\,d\beta-\beta\,d\alpha)=a^2w=w^2.
\]
Projecting \eqref{eq:gaussian-total-normal-image} and applying
injectivity proves \eqref{eq:relative-first-jet-formula}.
\end{proof}
\endgroup

\subsection{The relative normalization}
\label{subsec:relative-normalization}

It remains to determine the constant $c$ in
{\eqref{eq:sigma-zero-general-c}} relative to the normalized smoothing
\eqref{eq:quadric-normalized-smoothing}. We compute it by Serre duality.

\begingroup
Use the fixed-cubic family \eqref{eq:explicit-Q-u} with canonical
coordinates as in Subsection~\ref{subsec:quadric-smoothing}. We retain
the coefficients $p_i$ in the following calculation; it applies in
particular to the chosen $P(x)=x^5-x+1$. On the affine chart $X_0\neq0$, normalized by $X_0=1$, set
$x=X_1/X_0$ and $z=X_2/X_0$.
The homogeneous polynomial $H$ in
\eqref{eq:fixed-homogeneous-cubic} dehomogenizes to
\[
H(x,z)=p_0+p_1x+p_2z+p_3xz+p_4z^2+p_5xz^2,
\qquad H(x,x^2)=P(x).
\]
\endgroup

We compute the Kodaira--Spencer class using the two projective
canonical charts
\begin{equation*}
U_0=\{X_0\neq0\},
\qquad
U_\infty=\{X_2\neq0\}.
\end{equation*}
On $U_0$ put
\begin{equation*}
x=\frac{X_1}{X_0},
\qquad
y=\frac{X_3}{X_0},
\end{equation*}
and on $U_\infty$ put
\begin{equation*}
s=\frac{X_1}{X_2},
\qquad
v=\frac{X_3}{X_2}.
\end{equation*}
{On $U_0$, the quadric equation
\eqref{eq:explicit-Q-u} eliminates $z=X_2/X_0$;
on $U_\infty$, it eliminates $X_0/X_2$. Explicitly,}
\begingroup
\[
z=x^2-uy^2,
\qquad
\frac{X_0}{X_2}=s^2-uv^2.
\]
\endgroup
{Consequently, on the overlap,}
\begin{equation}
\label{eq:projective-transition-u}
x=\frac{s}{s^2-uv^2},
\qquad
y=\frac{v}{s^2-uv^2}.
\end{equation}

On $U_0$, eliminating $X_2/X_0$ from the cubic gives
\begin{equation*}
y^3-P(x)+uy^2R(x)+O(u^2)=0,
\end{equation*}
where
\begin{equation*}
R(x)=p_2+p_3x+2p_4x^2+2p_5x^3.
\end{equation*}
Thus a regular lift of $\partial/\partial u$ on $U_0$ {along the central fiber} is
\begin{equation}
\label{eq:lift-U0}
V_0
=
\frac{\partial}{\partial u}
-\frac{R(x)}{3}\frac{\partial}{\partial y}.
\end{equation}

Similarly, on $U_\infty$ the central fiber is
\begin{equation*}
v^3=P_\infty(s),
\end{equation*}
where
\begin{equation*}
P_\infty(s)
=
p_5s+p_4s^2+p_3s^3+p_2s^4+p_1s^5+p_0s^6.
\end{equation*}
Eliminating $X_0/X_2$ gives to first order
\begin{equation*}
v^3-P_\infty(s)+uv^2S(s)+O(u^2)=0,
\end{equation*}
where
\begin{equation*}
S(s)
=
p_4+p_3s+2p_2s^2+2p_1s^3+3p_0s^4.
\end{equation*}
Hence a regular lift on $U_\infty$ {along the central fiber} is
\begin{equation}
\label{eq:lift-Uinfty}
V_\infty
=
\frac{\partial}{\partial u}
-\frac{S(s)}{3}\frac{\partial}{\partial v}.
\end{equation}

Let $\xi$ denote the Kodaira--Spencer class of the deformation.
Differentiating the transition functions
\eqref{eq:projective-transition-u} and comparing the two regular
lifts \eqref{eq:lift-U0} and \eqref{eq:lift-Uinfty}, then restricting
to $u=0$, gives the \v{C}ech representative
\begin{equation}
\label{eq:KS-cocycle}
V_0-V_\infty
=
-\frac{y^2}{x}\frac{d}{dx}.
\end{equation}
{Here $d/dx$ is the tangent derivation on $y^3=P(x)$, with
$dy/dx=P'(x)/(3y^2)$. The factor $y^2$ makes the vector field
regular at the finite branch points in the overlap.}
For completeness, the equality of the $y$-components is equivalent
to the polynomial identity
\begin{equation*}
-R(x)-\frac{3P(x)}{x^2}
+x^2S(1/x)
=
-\frac{P'(x)}{x},
\end{equation*}
which follows directly from the definitions of $R$ and $S$.

Pairing \eqref{eq:KS-cocycle} with
\begin{equation*}
w=a^2=\frac{(dx)^2}{y^2}
\end{equation*}
gives
\begin{equation}
\label{eq:serre-meromorphic-differential}
w(V_0-V_\infty)
=
-\frac{dx}{x}.
\end{equation}
The curve $C:y^3=P(x)$ has a unique point $\infty$ above infinity,
and
\begin{equation*}
\operatorname{ord}_\infty(x)=-3.
\end{equation*}
Consequently
\begin{equation*}
\operatorname{Res}_\infty\left(-\frac{dx}{x}\right)=3.
\end{equation*}
{For this cover $U_0=C\setminus\{\infty\}$, and the Serre trace
of the meromorphic differential representing the cocycle is, up to
the ordering sign, its residue at the complement of $U_0$.
Thus it is the residue at $\infty$ that computes the pairing.}
Up to the sign convention determined by the ordering of the
\v{C}ech cover,
\begin{equation}
\label{eq:KS-w-pairing}
\langle\xi,w\rangle=\pm3.
\end{equation}

We now relate this pairing to the coefficient $c$ of the polar part
of the Szeg\H{o} kernel. Let
\begin{equation*}
B_\xi:H^0(C,L)\longrightarrow H^1(C,L)
\end{equation*}
be the first-order obstruction map for lifting sections of $L$.
{Using Serre duality, write
$B_\xi(s,t):=\langle B_\xi(s),t\rangle$ for the resulting bilinear form
on $H^0(C,L)$.}
{The form is alternating, as we now verify using the spin identification.}
If the Kodaira--Spencer cocycle is represented locally by
\begin{equation*}
\xi_{ij}=\eta_{ij}(z)\frac{\partial}{\partial z},
\end{equation*}
then its infinitesimal action on a local half-form
$s(z)(dz)^{1/2}$ is
\begin{equation*}
D_{\eta_{ij}}s
=
\eta_{ij}s'+\frac12\eta_{ij}'s.
\end{equation*}
{After pairing with a second section, the symmetric part is the
exact differential
\[
\bigl((D_\eta s)t+(D_\eta t)s\bigr)\,dz=d(\eta st).
\]
Its Serre trace is zero, since the residue of an exact differential is
zero. Thus $B_\xi(s,t)+B_\xi(t,s)=0$.}
Consequently, after pairing with a second section $t$,
antisymmetrization gives
\begin{align*}
B_\xi(s,t)-B_\xi(t,s)
&=
\left\langle
\eta_{ij}(ts'-st'){\,dz},1
\right\rangle\\
&=
\left\langle
\xi,t\,ds-s\,dt
\right\rangle.
\end{align*}
The terms involving $\frac12\eta_{ij}'st$ cancel. Since $B_\xi$
is alternating for a deformation of a theta characteristic,

\begin{equation*}
2B_\xi(s,t)=B_\xi(s,t)-B_\xi(t,s),
\end{equation*}
and hence
\begin{equation}
\label{eq:obstruction-wronskian-exact}
{B_\xi(s,t)
=
\frac12\left\langle\xi,t\,ds-s\,dt\right\rangle
=
-\frac12\left\langle\xi,s\,dt-t\,ds\right\rangle.}
\end{equation}
Applying this to $s=\alpha$ and $t=\beta$, and using
\eqref{eq:KS-w-pairing} together with
\eqref{eq:w-equals-a2}, gives
\begin{equation}
\label{eq:B-alpha-beta}
{B_\xi(\alpha,\beta)
=-\frac12\langle\xi,w\rangle
=\pm\frac32.}
\end{equation}

The same number can be checked directly in the spin frames of
\eqref{eq:spin-transition-first-order}. Local lifts of $\alpha$ are
$\rho$ on $U_0$ and $s\tau$ on $U_\infty$, while local lifts of
$\beta$ are $x\rho$ and $\tau$. Their differences, divided by $u$
and restricted to the central fiber, are respectively
\[
-\frac{y^2}{2x^2}\alpha,
\qquad \frac{y^2}{2x}\alpha.
\]
Pairing the first with $\beta$ gives $-\tfrac12dx/x$, of residue
$3/2$ at infinity; pairing the second with $\alpha$ gives its
negative. The diagonal pairings have residue zero. Thus the
obstruction matrix is alternating and nondegenerate, with
\[
b:=B_\xi(\alpha,\beta)=\pm\frac32.
\]
In particular, this verifies transversality to the reduced theta-null
divisor and the simple pole used above: the local skew zero-mode
block has first term $uB_\xi$, with nonzero linear Pfaffian.

\begingroup
We next fix the scalar relating this obstruction to the polar
coefficient, without an implicit normalization of an inverse Dirac
operator. Choose a point $z\in C$ with $\alpha(z)\neq0$, and extend
it to a section $z_u$ of the family. Restricting the diagonal
principal-parts sequence to $C_u\times\{z_u\}$ gives
\[
0\longrightarrow L_u\otimes(L_u)_{z_u}
\longrightarrow L_u(z_u)\otimes(L_u)_{z_u}
\longrightarrow\mathbb C_{z_u}\longrightarrow0.
\]
Here $\mathbb C_{z_u}$ is the skyscraper sheaf, with its quotient
coordinate normalized by the residue map. The connecting map for
principal part $1$ is Serre dual to evaluation at $z$, up to the
common residue sign. The section $\sigma_u(\,\cdot\,,z_u)$ has
principal part $u$ and central value
\[
c\bigl(\alpha\,\beta(z)-\beta\,\alpha(z)\bigr).
\]
Write $s_z=c(\alpha\,\beta(z)-\beta\,\alpha(z))$, and let
$\partial_z:\mathbb C\to H^1(C,L)\otimes L_z$ be the connecting map
of the central principal-parts sequence. Reducing the relative section
modulo $u^2$ gives the first-order lifting equation
\[
(B_\xi\otimes\operatorname{id}_{L_z})(s_z)=\pm\partial_z(1).
\]
This follows by comparing local regular lifts of $s_z$: their
differences are canceled by $u$ times local lifts of principal part $1$.
Under Serre duality, $\langle\partial_z(1),t\rangle=\pm t(z)$
for $t\in H^0(C,L)$, with the residue normalization of the sequence.
After a trivialization of $(L_u)_{z_u}$, the obstruction on the left is
\[
c\bigl(B_\xi(\alpha)\,\beta(z)
-B_\xi(\beta)\,\alpha(z)\bigr).
\]
Pairing with $\alpha$, and using alternation, gives
$cb\,\alpha(z)=\pm\alpha(z)$. Since $\alpha(z)\neq0$, it follows
that $cb=\pm1$. Therefore
\begin{equation}
\label{eq:c-value}
c=\pm\frac23,\qquad c^2=\frac49.
\end{equation}
\endgroup

Combined with $\lambda=1$, this gives
\begin{equation}
\label{eq:clambda-value}
{(c\lambda)^2=\frac49\neq\frac19.}
\end{equation}
This is the sign-independent numerical statement needed below.

\subsection{Nonvanishing of the first variation}
\label{subsec:first-variation-nonzero}

For the normalized smoothing of
\eqref{eq:quadric-normalized-smoothing}, equation
\eqref{eq:lambda-one} gives
\[
{\lambda=1.}
\]
Proposition~\ref{prop:relative-first-jet} therefore gives
\begin{equation}
\label{eq:final-first-variation-coefficient}
\nu
=
{(3c-\varepsilon)}
[a\alpha]\wedge[a\beta],
\qquad
\varepsilon^2=1.
\end{equation}
If this class vanished, then {$3c=\varepsilon$ and hence $c^2=1/9$.}  This contradicts \eqref{eq:c-value}, which gives
$c^2=4/9$.  Thus $\nu\neq0$.

We have proved the following statement, which will be used in the rank-jump
argument:
\begin{proposition}
\label{prop:cyclic-first-variation}
There exist a smooth cyclic trigonal genus-$4$ curve with a vanishing
theta-null and a one-parameter deformation transverse to the
theta-null divisor such that
\[
\operatorname{pr}_{\wedge^2R^\vee}(\dot\omega_0)=\nu\neq0.
\]
\end{proposition}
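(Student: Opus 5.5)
The plan is to assemble the statement from the pieces already established in this section, using the explicit data fixed there. We take the central curve $C:y^3=P(x)$ with $P(x)=x^5-x+1$, the theta-null $L=\OO_C(3\infty)$ of Subsection~\ref{subsec:cyclic-theta-null}, and the fixed-cubic family $C_u=\{Q_u=G=0\}$ of \eqref{eq:explicit-Q-u} with a chosen spin lift $(C_u,L_u)$. Smoothness of the central curve follows because $P$ is square-free. Nonhyperellipticity and $h^0(L)=2$ were checked in Subsection~\ref{subsec:cyclic-theta-null}, and smoothness of nearby fibers follows after shrinking the disc.

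The first step is transversality to the reduced theta-null divisor, together with the simple pole of the Szeg\H{o} kernel, so that $\sigma_u=us_u$ and $\omega_u$ are defined as in Section~4. We take this from Subsection~\ref{subsec:relative-normalization}. There the Kodaira--Spencer cocycle \eqref{eq:KS-cocycle} pairs with $w$ to give $\pm3$. By \eqref{eq:B-alpha-beta}, the zero-mode obstruction matrix is alternating with $b=\pm\frac32\neq0$, so the local Pfaffian has nonzero linear term. This gives transversality, and Section~4 then gives $\sigma_0=c\,\alpha\wedge\beta$ with $c\neq0$. We also note that $h^0(L_u)=0$ for $u\neq0$ small, since the theta-null locus is cut out by this Pfaffian.

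The second step is the first-order computation. By the discussion around \eqref{eq:intrinsic-first-normal-symbol}, the projection of $\dot\omega_0$ to $\bigwedge^2R^\vee$ is independent of the trivialization, and it equals $\nu$. Proposition~\ref{prop:relative-first-jet}, whose proof relies on Lemma~\ref{lem:gaussian-kernel-cyclic} for $P=x^5-x+1$ through the injectivity of $\overline\gamma_0$, gives $\nu=(3c-\varepsilon)[a\alpha]\wedge[a\beta]$. The generator $[a\alpha]\wedge[a\beta]$ is nonzero by \eqref{eq:wedge-null-generator}.

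The final step, and the only delicate one, is excluding $3c=\varepsilon$. This needs the value of $c$ itself, not merely $c\neq0$. We use $cb=\pm1$ from the principal-parts comparison at a point $z$ with $\alpha(z)\neq0$. With $b=\pm\frac32$ this gives $c^2=\frac49$, whereas $3c=\varepsilon$ would force $c^2=\frac19$. The essential point is that every undetermined sign enters only through squares, so the sign conventions in the Serre pairings and residues cannot affect the conclusion. Hence $\nu\neq0$, which proves the proposition.
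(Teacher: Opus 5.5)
Your proposal is correct and follows the paper's argument essentially step for step. It uses the same curve $y^3=x^5-x+1$ with the fixed-cubic smoothing $Q_u=X_0X_2-X_1^2+uX_3^2$, gets transversality and the simple pole from $b=B_\xi(\alpha,\beta)=\pm\frac32$, takes $\nu=(3c-\varepsilon)[a\alpha]\wedge[a\beta]$ from Proposition~\ref{prop:relative-first-jet} (via injectivity of $\overline\gamma_0$ from Lemma~\ref{lem:gaussian-kernel-cyclic}), and excludes $3c=\varepsilon$ using $cb=\pm1$, so that $c^2=\frac49\neq\frac19$, with all residual signs entering only through squares, exactly as the paper does.
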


\section{The rank jump}

At the special fiber,
\[
\rk \omega_0=4.
\]
Equivalently, the contraction map
\[
\iota(\omega_0):V_0^\vee \to V_0,
\qquad V_0=H^0(C,K_C^{3/2}),
\]
has $2$-dimensional kernel
\[
R:=\ker(\iota(\omega_0))\subset V_0^\vee.
\]

\begin{lemma}\label{lem:rankjump}
If the image of \(\dot\omega_0\) in \(\wedge^2 R^\vee\) is nonzero, then for all sufficiently small \(t\neq 0\),
\[
\rk \omega_t=6.
\]
\end{lemma}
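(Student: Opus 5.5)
We reduce the rank statement to a Pfaffian computation in a frame adapted to the splitting $V_0=\operatorname{Im}\oplus W$. Here $W=\langle a\alpha,a\beta\rangle$ is any complement, so the projection $W\to V_0/\operatorname{Im}\cong R^\vee$ is an isomorphism.

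Since $H^1(C_t,L_t^3)=0$, the spaces $H^0(C_t,K_{C_t}^{3/2})$ form a rank-$6$ vector bundle near $t=0$. We trivialize it holomorphically, extending the basis $e_0,\dots,e_5$ of $V_0$. In this trivialization $\omega_t$ is a holomorphic family of $6\times 6$ skew matrices $M(t)=M_0+tM_1+O(t^2)$, and the class of $M_1$ modulo $\operatorname{Im}\wedge V_0$ is the symbol $\nu=[\dot\omega_0]$. As noted after \eqref{eq:intrinsic-first-normal-symbol}, this class is independent of the trivialization.

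We write $M(t)$ in block form with respect to $V_0=\operatorname{Im}\oplus W$. Concretely, we write $\omega_t=A(t)+B(t)+D(t)$ with $A(t)\in\bigwedge^2\operatorname{Im}$, $B(t)\in\operatorname{Im}\otimes W$ and $D(t)\in\bigwedge^2W$. By Section~5, $A(0)=\omega_0$ is nondegenerate as a bivector on the $4$-dimensional space $\operatorname{Im}$, since it has rank $4$ and $\operatorname{Im}$ is its image. We also have $B(0)=0$ and $D(0)=0$. The $\bigwedge^2 W$-component of $\dot\omega_0$ is exactly the image of $\dot\omega_0$ under
\[
\bigwedge\nolimits^2V_0\to\bigwedge\nolimits^2V_0/(\operatorname{Im}\wedge V_0)\cong\bigwedge\nolimits^2W\cong\bigwedge\nolimits^2R^\vee .
\]
Hence $D(t)=t\,\nu+O(t^2)$ with $\nu\neq0$ by hypothesis.

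The key step is the Pfaffian (Schur complement) formula for skew forms. We identify the top exterior power using $\bigwedge^6V_0\cong\bigwedge^4\operatorname{Im}\otimes\bigwedge^2W$. Expanding $\omega_t^{\wedge3}$, the terms of type $A\wedge A\wedge D$ contribute $3\,A^{\wedge2}\wedge D$. The terms of type $A\wedge B\wedge B$ contribute a term quadratic in $B$. Every other monomial either has the wrong type, or involves $D^{\wedge 2}$ or higher powers, and so lies in higher order. Since $B=O(t)$, the $A\wedge B\wedge B$ term is $O(t^2)$, and so are the remaining terms. Therefore
\[
\omega_t^{\wedge3}=3t\,\omega_0^{\wedge2}\wedge\nu+O(t^2).
\]
Equivalently, $\operatorname{Pf}M(t)=t\,\operatorname{Pf}(A_0)\,\nu_{45}+O(t^2)$, where $\nu_{45}$ is the coefficient of $\nu$ on $e_4\wedge e_5$. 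Here $\omega_0^{\wedge2}$ is a nonzero generator of $\bigwedge^4\operatorname{Im}$, and $\nu$ is a nonzero element of the line $\bigwedge^2W$. So the leading coefficient is nonzero, $\operatorname{Pf}M(t)$ has a simple zero at $t=0$, and it is nonzero for all sufficiently small $t\neq0$. That is, $\rk\omega_t=6$.

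The main point to check carefully is the order bookkeeping: we must confirm that the $B\wedge B$ contribution is genuinely $O(t^2)$. Since $B(0)=0$, this follows, and the argument does not need any information about $\dot B$. We should also note that the block decomposition depends on the chosen complement $W$ and the trivialization, but the leading coefficient does not. Indeed, that coefficient only involves $\omega_0^{\wedge2}$ and the intrinsic class $\nu$, consistent with the independence statement proved earlier.
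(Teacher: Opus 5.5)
Your proposal is correct and is essentially the paper's argument. You take a block decomposition adapted to $\operatorname{Im}$, dual to the paper's $V_0^\vee=R\oplus U$, whose off-diagonal and $\bigwedge^2W$ blocks vanish at $t=0$; your expansion of $\omega_t^{\wedge3}$ is the Pfaffian form of the paper's Schur-complement step, with the $A\wedge B\wedge B$ term playing the role of the $O(t^2)$ correction $t^2\,CJ^{-1}C^{t}$.
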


\begin{proof}
Choose a decomposition
\[
V_0^\vee = R\oplus U
\]
such that the restriction of \(\omega_0\) to \(U\) is nondegenerate. Relative to the dual decomposition of \(V_0\), the family \(\omega_t\) has block form
\[
\omega_t=
\begin{pmatrix}
tB+O(t^2) & tC+O(t^2)\\
-tC^t+O(t^2) & J+tD+O(t^2)
\end{pmatrix},
\]
where \(J\) is an invertible \(4\times4\) skew matrix and \(B\) is the skew \(2\times2\) matrix representing the image of \(\dot\omega_0\) in \(\wedge^2 R^\vee\). Since \(\dim R=2\), the condition \(B\neq 0\) implies that \(B\) is invertible. The Schur complement is
\[
{tB+O(t^2),}
\]
whose leading term is \(tB\), hence invertible for \(t\neq 0\) sufficiently small.
\end{proof}

\section{Proof of the main theorem}

\begin{proof}[Proof of Theorem~\ref{thm:main}]
By Proposition~\ref{prop:cyclic-first-variation}, there is a cyclic
trigonal vanishing theta-null and a transverse one-parameter deformation
for which the image of $\dot\omega_0$ in $\wedge^2R^\vee$ is nonzero.
Lemma~\ref{lem:rankjump} therefore gives
$\operatorname{rk}\omega_t=6$ for all sufficiently small $t\neq0$ in
this family.
{For $t\neq0$ these fibers lie in $h^0(C_t,L_t)=0$, and the
associated conormal bivector $s_tq_t$ has rank $6$. Maximal bivector
rank is an open condition. Since the even-spin moduli stack is
irreducible (it is smooth and connected), this gives the assertion for
a general even spin curve of genus $4$. For the ideal noncontainment
below, a single such point already suffices.}

To spell out the last implication, let $f$ be a local equation of the
genus-$4$ Jacobian divisor, and let $q=df$ be its conormal at a general
Jacobian.
{Work on the open supermoduli locus $\FM_4^{+,0}$ fixed in the introduction.}
\begingroup
Since $f$ vanishes on the Jacobian locus, $\widetilde p^{\,*}f$ has
zero reduction. It is even, so in local odd coordinates
$\theta_1,\ldots,\theta_6$ it has the form
\[
\widetilde p^{\,*}f=F_2+F_4+F_6,\qquad
F_2=\sum_{i<j}A_{ij}\theta_i\theta_j,
\]
where $F_k$ is homogeneous of degree $k$ in the odd variables, and $A$ is the skew matrix
representing the quadratic term. The codifferential description
identifies this term with $s_1(q)$, with the fixed normalization of
Section~3.2. Thus $A$ has rank $6$ at the chosen point and
$\operatorname{Pf}(A)\neq0$ there. Products involving an $F_4$ or
$F_6$ in the cube have degree at least $8$, so
\[
\bigl(\widetilde p^{\,*}f\bigr)^3
=F_2^3=3!\operatorname{Pf}(A)\theta_1\cdots\theta_6\neq0.
\]
\endgroup

Thus $f^3\notin\widetilde I_4$, so $(I_4)^3$ is not contained in
$\widetilde I_4$.  On the other hand, the general nilpotence argument
for the six odd variables gives $(I_4)^4\subset\widetilde I_4$.
Consequently the smallest power is $d=4$.
\end{proof}

\end{document}